\numberwithin{equation}{section}
\newtheorem{theorem}{Theorem}[section]
\newtheorem{definition}[theorem]{Definition}
\newtheorem{lemma}[theorem]{Lemma}
\renewcommand{\pmod}[1]{\ (\mathrm{mod}\ #1)}
\title{Visibility in hypercubes}
\author{Maria Axenovich}
\address{Maria Axenovich \newline Karlsruhe Institute of Technology, Englerstraße 2, D-76131 Karlsruhe, Germany}
\email{maria.aksenovich@kit.edu}
\author{Dingyuan Liu}
\address{Dingyuan Liu \newline Karlsruhe Institute of Technology, Englerstraße 2, D-76131 Karlsruhe, Germany}
\email{liu@mathe.berlin}
\begin{document}
\maketitle

\vspace{-0.7em}
\begin{abstract}
A subset $M$ of vertices in a graph $G$ is a mutual-visibility set if any two vertices $u$ and $v$ in $M$ ``see'' each other in $G$, that is, there exists a shortest $u,v$-path in $G$ that contains no elements of $M$ as internal vertices. The mutual-visibility number $\mu(G)$ of a graph $G$ is the largest size of a mutual-visibility set in $G$. Let $n\in\mathbb{N}$ and $Q_{n}$ be an $n$-dimensional hypercube. Cicerone, Di Fonso, Di Stefano, Navarra, and Piselli showed that $2^{n}/\sqrt{n}\leq\mu(Q_{n})\leq2^{n-1}$. In this paper, we prove that $\mu(Q_{n})>0.186\cdot2^n$ and thus establish that $\mu(Q_{n})=\Theta(2^{n})$.

We also consider the chromatic mutual-visibility number, $\chi_{\mu}(G)$, defined as the smallest number of colors used on vertices of $G$, such that every color class is a mutual-visibility set in $G$. Klav\v{z}ar, Kuziak, Valenzuela-Tripodoro, and Yero asked whether $\chi_{\mu}(Q_{n})=O(1)$. We answer their question in the negative, namely, we show that $\chi_{\mu}(Q_{n})$ is a growing function of $n$. Moreover, we show that $\chi_{\mu}(Q_{n})=O(\log\log{n})$.

Finally, we study the so-called total mutual-visibility number of graphs and give asymptotically tight bounds on this parameter for hypercubes.
\end{abstract}

\section{Introduction}
Let $G$ be a simple graph and $M\subseteq{V(G)}$. For any two vertices $u,v\in{V(G)}$, we say that $u$ and $v$ are \textit{$M$-visible} if there exists a shortest $u,v$-path in $G$ that contains no vertices from $M\backslash\{u,v\}$. Note that any two adjacent vertices are $M$-visible.
\begin{definition}
\label{def1}
Let $M$ be a subset of vertices in a graph $G$. We say that $M$ is a mutual-visibility set if any two vertices $u,v\in{M}$ are $M$-visible. The largest size of a mutual-visibility set in $G$ is denoted by $\mu(G)$ and called the mutual-visibility number of $G$.
\end{definition}

The systematic investigation of mutual-visibility in graphs was pioneered by Di Stefano\cite{di2022mutual}, and has garnered extensive attention and subsequent research\cite[etc.]{boruzanli2024mutual,brevsar2024lower,cicerone2024cubelike,cicerone2023variety,cicerone2023mutual,cicerone2024mutual,cicerone2024diameter,korvze2024mutual,korvze2024variety} in recent years. The exact value of $\mu(G)$ is only known for very few graph classes, e.g., cliques, complete bipartite graphs, trees, cycles, grids\cite{di2022mutual}, butterflies\cite{cicerone2024cubelike}, Cartesian products of paths and cycles\cite{korvze2024mutual}, Cartesian products of cliques, and direct products of cliques\cite{cicerone2024mutual}. It has been shown in the initial work by Di Stefano\cite{di2022mutual} that determining $\mu(G)$ for any given graph $G$ is NP-complete. By considering the mutual-visibility set consisting of all neighbors of the maximum degree vertex, we can find a large mutual-visibility set in any dense graph. As the mutual-visibility problem was initially motivated by establishing efficient and confidential communication in networks, the research on $\mu(G)$ mainly focuses on sparse and highly connected graphs, such as product graphs and hypercube-like graphs.

The primary objective of this paper is to explore the mutual-visibility phenomenon in hypercubes. For $n\in\mathbb{N}$, the \textit{$n$-dimensional hypercube} $Q_{n}$ is a graph on the vertex set $2^{[n]}$, where two vertices $A,B\in2^{[n]}$ form an edge in $Q_{n}$ if and only if their \textit{symmetric difference} $A\Delta{B}:=(A\backslash{B})\cup(B\backslash{A})$ has size $1$. Cicerone, Di Fonso, Di Stefano, Navarra, and Piselli\cite{cicerone2024cubelike} initially showed that $2^{n}/\sqrt{n}\leq\mu(Q_{n})\leq2^{n-1}$. The upper bound was later improved by Kor\v{z}e and Vesel\cite{korvze2024variety} to $\mu(Q_{n})\leq\frac{59}{128}\cdot2^{n}$ for $n\geq7$. Bodn\'ar\cite{bodnar2025github} recently obtained $\mu(Q_{n})\leq\frac{1916879}{4718592}\cdot2^n$ for sufficiently large $n$ using flag algebras. Here, we prove that $\mu(Q_{n})$ is at least a constant fraction of $2^{n}$ and hence determine $\mu(Q_{n})$ up to a multiplicative constant.
\begin{theorem}
\label{thm1}
For every $n\in\mathbb{N}$, we have $\mu(Q_{n})>\frac{14}{75}\cdot2^{n}$.
\end{theorem}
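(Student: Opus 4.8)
The plan is to exhibit an explicit set $M\subseteq 2^{[n]}$ of size $>\frac{14}{75}2^{n}$ in which every pair is visible. I would first record the combinatorial meaning of visibility in $Q_{n}$: since $d(A,B)=|A\Delta B|$, a shortest $A,B$-path is exactly an ordering of the flips of the coordinates in $A\Delta B$, and its internal vertices are the proper members of the interval $[A\cap B,\,A\cup B]:=\{C:A\cap B\subseteq C\subseteq A\cup B\}$. Thus $A$ and $B$ are $M$-visible iff some ordering of these flips produces no internal vertex in $M$. The baseline observation is that a single layer $\binom{[n]}{k}$ is a mutual-visibility set: for same-weight $A,B$ the path that first removes all of $A\setminus B$ and then adds all of $B\setminus A$ has all internal weights strictly below $k$, so it never revisits $M$ internally. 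This already yields $\mu(Q_{n})\ge\binom{n}{\lfloor n/2\rfloor}=\Theta(2^{n}/\sqrt{n})$. The content of the theorem is that one must stack $\Theta(\sqrt{n})$ layers to reach a constant fraction, and the difficulty is precisely controlling visibility \emph{across} layers.

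The construction I would pursue confines $M$ to the central band $W=\{k:|k-n/2|\le c\sqrt{n}\}$ of layers. By a Chernoff/central-limit estimate this band carries a $(1-o(1))$-fraction of $2^{[n]}$, so it suffices to select a constant fraction $\rho$ of each band layer while keeping all pairs visible; the value $\frac{14}{75}$ would then emerge from optimizing $c$ and $\rho$ against the visibility constraints. The key structural reduction is that, for $A,B\in M\subseteq W$, any $M$-vertex on a shortest path occurs only while the running weight lies in $W$; inside the subcube $[A\cap B,A\cup B]\cong Q_{|A\Delta B|}$ this is a slab of at most $O(\sqrt{n})$ consecutive levels straddling the middle. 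Hence visibility reduces to a single task: \emph{cross this $O(\sqrt{n})$-thick dangerous slab by a flip-ordering that dodges the selected vertices}, using the freedom to choose which coordinate to flip at each step. I would design the per-layer selection rule so that from every selected vertex there is always an admissible neighbour one level down (resp.\ up) that is unselected, and so that these escape steps chain together across the whole slab; the density at which such an escape system is still guaranteed is what pins down the constant.

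The main obstacle is the ``no free direction'' phenomenon, which rules out the obvious dense rules. If membership in $M$ is governed by a fixed linear statistic --- the weight modulo $p$, a fixed weighted sum $\sum a_{i}v_{i}$, or membership controlled by a fixed block of coordinates --- then an adversary can choose a nested pair $A\subsetneq B$ whose difference set $B\setminus A$ lies in a direction along which the statistic is (nearly) constant; every shortest path between $A$ and $B$ is then forced through the same sequence of selected vertices, and the pair is blocked. Concretely, selecting $M$ by weight modulo $p$ fails because a monochromatic nested chain of length $\ge p$ meets every residue, and making $M$ depend on a fixed half $H$ of the coordinates fails on pairs with $B\setminus A\subseteq[n]\setminus H$. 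Any fully symmetric statistic collapses to the weight and is therefore pinned along every geodesic. The successful rule must thus use all coordinates yet leave no blockable direction, and I expect it to be a genuinely combinatorial (rather than linear) gadget. Producing a constant-density selection of the band in which no relevant interval is fully blocked, proving the slab-crossing lemma, and then optimizing the parameters to reach $\mu(Q_{n})>\frac{14}{75}2^{n}$ is the heart of the argument.
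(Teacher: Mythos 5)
There is a genuine gap, and you essentially acknowledge it yourself: the entire argument hinges on producing a constant-density selection of each selected layer such that every relevant subinterval of the cube contains an unselected ``hole,'' and you leave that construction as the unresolved ``heart of the argument.'' The paper's answer to exactly this question is the notion of daisy-free families: within each selected layer $\mathcal{L}_r$ one takes a subfamily $\mathcal{F}_r$ containing no $(r,2d,d)$-daisy, which by definition means that for every nested pair $A'\subseteq B'$ with $|A'|=r-d$ and $|B'|=r+d$ some $C\in\mathcal{L}_r$ with $A'\subseteq C\subseteq B'$ is omitted. That is precisely the ``no relevant interval is fully blocked'' property you are after, and chaining these holes layer by layer (by induction on the weight gap, using the fact that any $C$ with $A\cap B\subseteq C\subseteq A\cup B$ lies on a shortest $A,B$-path) gives mutual visibility. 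The nontrivial external input you are missing is that such daisy-free families can have \emph{constant} density: by a recent result of Ellis, Ivan, and Leader, $\pi(\mathcal{D}_r(5,2))\geq\prod_{k\geq1}(1-3^{-k})>0.56$, and the constant $\tfrac{14}{75}=\tfrac{0.56}{3}$ comes from this density divided by the spacing $d=3$ between selected layers --- not, as you guess, from optimizing a central band of width $c\sqrt{n}$ against a density $\rho$. Without this input your plan cannot be completed by elementary means; indeed, it was long conjectured that these daisy Tur\'an densities tend to $0$, in which case no approach of this shape would yield a constant fraction.

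Two further points. First, your proposed restriction to a central band of $O(\sqrt{n})$ layers is a red herring: the paper uses one full residue class of layers modulo $3$ across the entire cube, prunes the interior layers to daisy-free subfamilies, and averages over the three residue classes; no concentration-of-measure step is needed. Second, your critique of the ``weight modulo $p$'' rule is correct as stated for \emph{full} layers, but it is worth noting that the successful construction is still organized by weight modulo $d$ --- the fix is not to abandon the modular structure but to puncture each selected layer so that nested chains can dodge it.
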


We derive Theorem \ref{thm1} by establishing a connection between mutual-visibility sets in $Q_n$ and a special class of hypergraphs called daisies. Let $r,s,t\in\mathbb{N}$ with $\min\{r,s\}\geq{t}$. Following the notation of Bollob\'as, Leader, and Malvenuto\cite{bollobas2011daisies}, an \textit{$(r,s,t)$-daisy} $\mathcal{D}_{r}(s,t)$ is a collection of all $r$-element sets $T$ satisfying $A\subseteq{T}\subseteq{B}$, for some given $A\subseteq{B}$ with $\lvert{A}\rvert=r-t$ and $\lvert{B}\rvert=r+s-t$. The \textit{Tur\'{a}n number} $\mathrm{ex}(n,\mathcal{D}_{r}(s,t))$ is the largest size of a set family $\mathcal{F}_r\subseteq\binom{[n]}{r}$ containing no $(r,s,t)$-daisy. By constructing dense mutual-visibility sets in $Q_n$ via daisy-free families (see Section \ref{construction}), we obtain the following lower bound on $\mu(Q_n)$.
\begin{theorem}
\label{thm2}
Let $n, d\in\mathbb{N}$ with $n\geq d\geq 3$. Then
\begin{equation*}
\mu(Q_{n})\geq\frac{1}{d}\left(\sum_{r=d}^{n-d}\mathrm{ex}\left(n,\mathcal{D}_{r}(2d,d)\right) + 2\sum_{r=0}^{d-1} \binom{n}{r}\right).
\end{equation*}
\end{theorem}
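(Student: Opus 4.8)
The plan is to construct an explicit mutual-visibility set by placing extremal daisy-free families on the middle levels together with all of the extreme levels, and then to extract a large piece from a residue-based partition. Concretely, for each $r$ with $d\le r\le n-d$ fix a family $\mathcal{F}_r\subseteq\binom{[n]}{r}$ with $|\mathcal{F}_r|=\mathrm{ex}(n,\mathcal{D}_r(2d,d))$ containing no $(r,2d,d)$-daisy, and set $M^*=\mathcal{F}^*\cup E$, where $\mathcal{F}^*=\bigcup_{r=d}^{n-d}\mathcal{F}_r$ and $E$ is the set of all $S\subseteq[n]$ with $|S|<d$ or $|S|>n-d$. For $\rho\in\{0,1,\dots,d-1\}$ let $M^{(\rho)}=\{S\in M^*:|S|\equiv\rho\pmod d\}$. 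Since the $d$ sizes in each extreme block realize all residues mod $d$ exactly once, $\sum_{\rho}|M^{(\rho)}|=|M^*|=\sum_{r=d}^{n-d}\mathrm{ex}(n,\mathcal{D}_r(2d,d))+2\sum_{r=0}^{d-1}\binom{n}{r}$. Hence it suffices to prove that each $M^{(\rho)}$ is a mutual-visibility set, for then the largest class has the desired size and $\mu(Q_n)\ge\frac1d|M^*|$.

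The point of the residue partition is that inside $M^{(\rho)}$ every occupied level is $\equiv\rho\pmod d$, so occupied levels are spaced exactly $d$ apart: the two extreme occupied levels are taken in full, while every middle occupied level carries a daisy-free family. Fix $A,B\in M^{(\rho)}$; identifying the interval $[A\cap B,A\cup B]$ with a subcube, $A$ and $B$ become antipodal, and an $M^{(\rho)}$-visible certificate is a geodesic from $A$ to $B$ whose internal vertices avoid $M^{(\rho)}$. I would track a geodesic by its size profile, a $\pm1$ lattice path from $|A|$ to $|B|$ in which an up-step adds an element of $B\setminus A$ and a down-step removes one of $A\setminus B$. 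Because consecutive occupied levels differ by $d\ge 3$, between any two of them lie at least two consecutive free sizes; this lets me run all ``banking'' oscillations (needed to spend the down-steps of a non-comparable pair) inside a width-two band of free sizes, so those internal vertices sit on unoccupied levels and are automatically outside $M^{(\rho)}$. Choosing whether to bank high or low, I can keep the profile inside $[\min(|A|,|B|),\max(|A|,|B|)]$ when both endpoints are middle, and otherwise push it just off the extreme endpoint level, so that no extreme level is ever used internally. What remains after the oscillations is a single monotone segment that must cross each intervening occupied (necessarily middle) level exactly once.

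The heart of the matter is the following dodging statement, which I would prove by a greedy sweep and which is exactly what daisy-freeness buys. Suppose I must build a chain $P=C_0\subset C_1\subset\dots\subset C_m=R$ with $|C_j|=|P|+jd$, where $|P|\equiv|R|\equiv\rho\pmod d$ and, for each internal $j$, the level of $C_j$ carries a daisy-free family $\mathcal{G}_j$; I claim one can choose all $C_j\notin\mathcal{G}_j$. Having fixed $C_{j-1}$, there remain at least $2d$ elements in $R\setminus C_{j-1}$; taking any $2d$ of them as $D$ and setting the daisy bottom $A'=C_{j-1}$ and top $A'\cup D$, daisy-freeness of $\mathcal{G}_j$ forces some petal $C_j=A'\cup S$ with $|S|=d$ and $C_j\notin\mathcal{G}_j$, while the leftover $d$ elements $D\setminus S$ leave room to continue (at the last step $D=R\setminus C_{m-1}$ has size exactly $2d$ and closes up to $R$). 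The non-occupied levels between consecutive $C_j$ are crossed in any order. Restricting a middle family to a subcube preserves daisy-freeness, so this applies verbatim to the monotone segment produced above.

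I expect the main obstacle to be the bookkeeping that converts an arbitrary pair $A,B\in M^{(\rho)}$ into one clean monotone segment flanked by safe oscillations, uniformly across the cases (comparable versus not, one or both endpoints extreme), while certifying that the chosen profile never sits internally on an extreme level and never revisits an occupied level. The delicate cases are the non-comparable pairs, where the down-steps must be spent without returning to an occupied size, and endpoints lying on a full extreme level, where the geodesic must leave that level on its first step and stay off it thereafter; both rely on the width-two free band guaranteed by $d\ge 3$. Once the size profile is pinned down, every middle crossing is resolved by the greedy daisy sweep and every oscillation vertex is free by construction, which yields the required visibility certificate and hence the theorem.
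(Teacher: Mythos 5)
Your argument is, in substance, the paper's: the same residue-class construction $M^{(\rho)}$, the same pigeonhole reduction of Theorem~\ref{thm2} to the statement that each class is a mutual-visibility set (the paper's Lemma~\ref{lem:lem2}), the same use of $\mathcal{D}_r(2d,d)$-freeness to find a ``hole'' in each intervening occupied layer, and the same two-level oscillation to spend the down-steps of a non-comparable pair. The only real difference is organizational, and it is exactly at the spot you flag as the main obstacle that your version, as written, does not close. Your dodging lemma takes the daisy bottom to be the previous chain vertex $C_{j-1}$, so it needs the monotone sweep to begin at a path vertex of size $\equiv\rho\pmod{d}$. But a monotone segment ``flanked by safe oscillations'' begins at the post-banking set $P$ with $\lvert P\rvert=\lvert A\rvert+2\not\equiv\rho\pmod{d}$, and $P\not\supseteq A$ since banking removes elements of $A\setminus B$. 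For the first occupied crossing at level $\lvert A\rvert+d$ the daisy bottom must then be some $\lvert A\rvert$-subset $A'$ of $P$ that is not itself a vertex of your path; daisy-freeness guarantees a missing petal $T$ with $A'\subseteq T\subseteq A'\cup D$, but only $\binom{2d-2}{d-2}$ of the $\binom{2d}{d}$ petals contain $P$, so the missing petal need not be reachable by your remaining up-steps.

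Two repairs work. (1) Put all oscillations at the far end: go monotonically from $A$ up to level $\lvert B\rvert-2$, dodging each occupied crossing by your greedy sweep started at $C_0=A$ (which now genuinely has size $\equiv\rho\pmod d$), then spend all $\lvert A\setminus B\rvert$ down-steps oscillating in the free band $\{\lvert B\rvert-2,\lvert B\rvert-1\}$, which exists because $d\ge3$; one banking phase always suffices. (2) Reverse the order of operations, as the paper does: anchor the daisy at $A\cap B$ and $A\cup B$ rather than at a path vertex, so that the next occupied layer always omits some $C$ with $A\cap B\subseteq C\subseteq A\cup B$ (its Claim~2), note that any such $C$ lies on a geodesic (its Claim~3), and only then build the oscillating segment from $A$ to $C$ (its Claim~1), recursing on the pair $C,B$. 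With either fix your proposal becomes a complete proof; also note the small indexing slip in your last sweep step, where the final $2d$-set is $R\setminus C_{m-2}$, not $R\setminus C_{m-1}$.
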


Klav\v{z}ar, Kuziak, Valenzuela-Tripodoro, and Yero\cite{klavzar2024color} recently introduced a coloring version of the mutual-visibility problem.
\begin{definition}
\label{chromatic}
Let $G$ be a graph. The smallest number of colors  in a vertex-coloring  of $G$ such that  each color class  is a mutual-visibility set is called the chromatic mutual-visibility number of $G$ and denoted by $\chi_{\mu}(G)$. 
\end{definition}

Equivalently, $\chi_{\mu}(G)$ is the smallest integer such that $V(G)$ can be partitioned into $\chi_{\mu}(G)$ mutual-visibility sets. It is clear that $\chi_{\mu}(G)\geq\lvert{V(G)}\rvert/\mu(G)$. Naturally, one might ask whether $\chi_{\mu}(G)=O\left(\lvert{V(G)}\rvert/\mu(G)\right)$ holds in general. Knowing that $\mu(Q_{n})=\Omega(2^{n})$ from an earlier arXiv version of this paper, Klav\v{z}ar et al.\cite{klavzar2024color} raised the following question:
\begin{center}
\textit{Is there an absolute constant $C>0$, such that $\chi_{\mu}(Q_{n})\leq{C}$ holds for all $n\in\mathbb{N}$?}
\end{center}
We answer their question in the negative with the following result. In particular, this shows that $\chi_{\mu}(G)$ can be arbitrarily far from the trivial lower bound $\lvert{V(G)}\rvert/\mu(G)$.
\begin{theorem}
\label{new}
For any $q>0$ there exists some $n_{0}>0$, such that $\chi_{\mu}(Q_{n})>q$ for all $n\geq n_{0}$. On the other hand, $\chi_{\mu}(Q_{n})=O(\log\log{n})$.
\end{theorem}

We also study a natural variant of mutual-visibility introduced by Cicerone, Di Stefano, Klav\v{z}ar, and Yero\cite{cicerone2024mutual}.
\begin{definition}
\label{def4}
Let $G$ be a graph and $M\subseteq{V(G)}$. We say that $M$ is a total mutual-visibility set if any two vertices $u,v\in{V(G)}$ are $M$-visible. The largest size of a total mutual-visibility set in $G$ is denoted by $\mu_{t}(G)$ and called the total mutual-visibility number of $G$.
\end{definition}

Since a total mutual-visibility set is always a mutual-visibility set, we have $\mu_{t}(G)\leq\mu(G)$. The exact value of $\mu_{t}(G)$ is also known for rather few graph classes, see\cite{boruzanli2024mutual,brevsar2024lower,bujta2023total,cicerone2024mutual,cicerone2023variety,cicerone2024cubelike,kuziak2023total,tian2022graphs}. Furthermore, determining $\mu_{t}(G)$ for any given graph $G$, as demonstrated in\cite{cicerone2023variety}, is also NP-complete. We obtain some tight bounds on $\mu_{t}(Q_{n})$, improving the previous result $\mu_{t}(Q_{n})\geq2^{n-2}/(n^{2}-n)$ by Bujt\'{a}s, Klav\v{z}ar, and Tian\cite{bujta2023total}.
\begin{theorem}
\label{thm4}
For every $n\in\mathbb{N}$, we have $2^{n-1}/n \leq \mu_{t}(Q_{n}) \leq  2^{n}/n$. If $n=2^{m}-1$ with $m\in\mathbb{N}$, then $2^{n}/(n+1) \leq\mu_{t}(Q_{n})\leq 2^{n}/n$.
\end{theorem}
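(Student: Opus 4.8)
The plan is to prove the total mutual-visibility number bounds for $Q_n$ via a connection to covering codes and efficient dominating sets. First I would establish the upper bound $\mu_t(Q_n)\le 2^n/n$. The key observation is that any total mutual-visibility set $M$ must be such that every vertex pair sees each other avoiding $M$ internally. A clean way to force an upper bound is to count via antipodal or axis-structure: for any vertex $v$ and each of its $n$ neighbors, consider shortest paths to the antipodal vertex $\bar v$; these $n$ paths through the neighbors must between them offer an $M$-avoiding route, which restricts how densely $M$ can sit. More concretely, I would look at each ``direction'' $i\in[n]$ and the perfect matching $F_i$ of edges flipping coordinate $i$; a total mutual-visibility set cannot contain both endpoints of too many matching edges simultaneously without blocking some pair, and averaging over the $n$ directions yields that $|M|\le 2^n/n$.

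For the lower bound $\mu_t(Q_n)\ge 2^{n-1}/n$ in general, and the sharper $2^n/(n+1)$ when $n=2^m-1$, the plan is to construct an explicit total mutual-visibility set using the structure of a Hamming code. When $n=2^m-1$, the perfect Hamming code $\mathcal{C}\subseteq\{0,1\}^n$ is an efficient dominating set: every vertex of $Q_n$ is within distance $1$ of exactly one codeword, and $|\mathcal{C}|=2^n/(n+1)$. I would take $M=\mathcal{C}$ and verify that it is a total mutual-visibility set: given any two vertices $u,v$, I must exhibit a shortest $u,v$-path whose internal vertices avoid $\mathcal{C}$. Because codewords are spread out at pairwise distance $\ge 3$, an internal vertex of a geodesic lies on $\mathcal{C}$ only in controlled situations, and one can reroute within the geodesic ``box'' $[u\cap v,\,u\cup v]$ by permuting the order in which coordinates are flipped. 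The freedom to reorder coordinate flips along a shortest path is what makes $Q_n$ so forgiving, and I expect a counting or pigeonhole argument to show that for sufficiently long geodesics some ordering avoids all codewords. For general $n$ (not of Hamming form), I would pad or project from a nearby Hamming dimension, or directly use a code with minimum distance $3$ and covering radius $1$ of size roughly $2^{n-1}/n$, losing a factor of about $2$ relative to the perfect case and thereby yielding the weaker general bound.

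The main obstacle I anticipate is the verification that the code-based set is genuinely \emph{total} mutual-visibility rather than merely mutual-visibility. For an ordinary mutual-visibility set one only needs pairs within $M$ to see each other, but here \emph{every} pair $u,v\in V(Q_n)$ must have an $M$-avoiding geodesic, including pairs where $u$ and $v$ are themselves far apart and the geodesic interval is a high-dimensional subcube densely sprinkled with codewords. Controlling that every such interval admits at least one codeword-free monotone lattice path from $u$ to $v$ is the crux; I would reduce this to a statement about the subcube $Q_k$ spanned by the differing coordinates (where $k=d(u,v)$) and show that the restricted code cannot block all $k!$ orderings of the flips. Establishing this blocking-avoidance cleanly — ideally by an inductive peeling argument on $k$ or by a direct injective/counting bound on how many geodesics a single codeword can intersect — is where the real work lies, and getting the constant to come out as $2^n/(n+1)$ exactly in the Hamming case will require that the efficient dominating property is used sharply rather than just as a size bound.
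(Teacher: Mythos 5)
Your proposal is missing the single ingredient that makes the whole theorem tractable: the characterization (due to Bujt\'as, Klav\v{z}ar, and Tian, quoted as Lemma \ref{lem5} in the paper) that $M\subseteq\{0,1\}^n$ is a total mutual-visibility set in $Q_n$ \emph{if and only if} no two elements of $M$ are at Hamming distance exactly $2$. You correctly identify the verification that a minimum-distance-$3$ code is a total mutual-visibility set as ``the crux'' and ``where the real work lies,'' but you then leave it as an anticipated obstacle rather than resolving it; with the characterization in hand this verification is immediate (distance $\geq 3$ forbids distance exactly $2$), and without it your geodesic-rerouting/ordering-counting plan is an unexecuted sketch. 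The characterization also exposes a conceptual error in your upper-bound argument: you propose to bound $|M|$ by showing $M$ cannot contain both endpoints of too many edges of the direction matchings $F_i$, but containing adjacent vertices (distance $1$) is \emph{not} an obstruction to total mutual-visibility --- only distance exactly $2$ is. The paper's upper bound is a double count driven by the characterization: each $x\in M$ has $\binom{n}{2}$ distance-$2$ partners, all necessarily outside $M$, while each $y\notin M$ can have at most $\lfloor n/2\rfloor$ distance-$2$ partners inside $M$ (two such partners sharing a flipped coordinate would themselves be at distance $2$), giving $|M|\binom{n}{2}\leq (2^n-|M|)\,n/2$ and hence $|M|\leq 2^n/n$. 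Your matching/averaging sketch does not produce this inequality and, as stated, rests on a false premise.

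On the lower bounds: your use of the perfect Hamming code for $n=2^m-1$ coincides with the paper's Case 1 and is correct modulo the missing characterization. For general $n$ your suggestion to ``pad or project from a nearby Hamming dimension'' is vague, though a shortened Hamming code of length $n$ does have $2^{n-\lceil\log_2(n+1)\rceil}\geq 2^{n-1}/n$ codewords at pairwise distance $\geq 3$ and would work once the characterization is available; the paper instead uses the Graham--Sloane constant-weight construction (partitioning each weight class by $\lVert x\rVert \bmod n$ to get within-layer distance $\geq 4$, then grouping weights by residue mod $4$ into two families $\mathcal{A}$ and $\mathcal{B}$ whose union covers all of $\{0,1\}^n$ up to the factor $n$), which the authors also reuse later for the coloring variant. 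In summary, the construction you chose is essentially right for the perfect case, but the proof has a genuine gap at its declared crux and a flawed upper-bound mechanism.
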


The remainder of this paper is organized as follows. In Section \ref{construction}, we exhibit our construction of a dense mutual-visibility set in $Q_{n}$ and prove the key lemma. Section \ref{proofofthm2} is dedicated to proving Theorem \ref{thm2}. We derive Theorem \ref{thm1} from Theorem \ref{thm2} in Section \ref{proofofthm1}. In Section \ref{coloring} we establish Theorem \ref{new}. Finally, we prove Theorem \ref{thm4} in Section \ref{totalmutual}. Section \ref{conclusions} contains concluding remarks and open questions.

\section{Mutual-visibility in $Q_{n}$}
\label{mutual}
Given $n\in\mathbb{N}$ and $r\in \{0, 1, \dots, n\}$, the \textit{$r^{\mathrm{th}}$ layer} $\mathcal{L}_{r}$ of the hypercube $Q_n$ is $\binom{[n]}{r}$, i.e., the family of all $r$-subsets of $[n]=\{1, \dots, n\}$. We say that the layer $\mathcal{L}_r$ is \textit{between} the layers $\mathcal{L}_{r'}$ and $\mathcal{L}_{r''}$ if $r'<r<r''$ or $r''<r<r'$. We denote by $\mathrm{dist}(A,B)$ the distance in $Q_n$ between vertices $A$ and $B$. Cicerone et al.\cite[Theorem 1]{cicerone2024cubelike} observed that for any $r$, $\mathcal{L}_{r}\cup \mathcal{L}_{r+d}$ with $d\geq 3$ is a mutual-visibility set in $Q_n$, giving a lower bound $\mu(Q_n) \geq \Omega(2^n/\sqrt{n})$. Here we construct a larger mutual-visibility set.

\subsection{Construction of a mutual-visibility set}
\label{construction}
Let $d\geq 3$. Let $\mathcal{F}_r$ be a $\mathcal{D}_r(2d,d)$-free subfamily of $\mathcal{L}_r$ for each $r\in \{d, \dots, n-d\}$ and let $\mathcal{F}_r=\mathcal{L}_r$ for $r\in\{0,\dots,d-1\}\cup\{n-d+1,\dots,n\}$. Let
\begin{equation*}
M(\lambda) = \bigcup_{r\equiv\lambda\pmod{d}} \mathcal{F}_r.    
\end{equation*}
We say that the layers of $Q_n$ containing vertices of $M(\lambda)$ are \textit{selected layers}. See Figure \ref{main-construction} for an illustration of $M(\lambda)$.
\begin{figure}[h] 
\centering
\begin{tikzpicture}[scale=0.7]
\node at (0,4.3){$[n]$};
\node at (0,-4.3){$\emptyset$};

\draw[ultra thick,darkgray] (-2.3,0)--(2.3,0);
\fill[white] (-2,0) circle (1.5pt);
\fill[white] (-1,0) circle (1.5pt);
\fill[white] (0,0) circle (1.5pt);
\fill[white] (1,0) circle (1.5pt);
\fill[white] (2,0) circle (1.5pt);
\node at (3,0){$\mathcal{F}_{r}$};

\draw[dashed,gray] (-2.2,0.3)--(2.2,0.3);
\draw[dashed,gray] (-2.2,0.6)--(2.2,0.6);
\draw[ultra thick,darkgray] (-2.23,0.9)--(2.23,0.9);
\fill[white] (-1.9,0.9) circle (1.5pt);
\fill[white] (-0.9,0.9) circle (1.5pt);
\fill[white] (-0.1,0.9) circle (1.5pt);
\fill[white] (0.9,0.9) circle (1.5pt);
\fill[white] (1.9,0.9) circle (1.5pt);
\node at (3,0.9){$\mathcal{F}_{r+d}$};

\draw[dashed,gray] (-2.03,1.2)--(2.05,1.2);
\draw[dashed,gray] (-2.03,1.5)--(2.03,1.5);
\draw[ultra thick,darkgray] (-2.03,1.8)--(2.03,1.8);
\fill[white] (-1.7,1.8) circle (1.5pt);
\fill[white] (-0.6,1.8) circle (1.5pt);
\fill[white] (0.3,1.8) circle (1.5pt);
\fill[white] (1.7,1.8) circle (1.5pt);
\node at (3,2.3){$\vdots$};

\draw[dashed,gray] (-1.8,2.1)--(1.83,2.1);
\draw[dashed,gray] (-1.7,2.4)--(1.73,2.4);
\draw[ultra thick,darkgray] (-1.63,2.7)--(1.63,2.7);
\fill[white] (-1.2,2.7) circle (1.5pt);
\fill[white] (-0.4,2.7) circle (1.5pt);
\fill[white] (0.5,2.7) circle (1.5pt);
\fill[white] (1.2,2.7) circle (1.5pt);

\draw[dashed,gray] (-1.45,3)--(1.45,3);
\draw[dashed,gray] (-1.1,3.3)--(1.12,3.3);
\draw[ultra thick,darkgray] (-0.79,3.6)--(0.79,3.6);

\draw[dashed,gray] (-2.2,-0.3)--(2.2,-0.3);
\draw[dashed,gray] (-2.2,-0.6)--(2.2,-0.6);
\draw[ultra thick,darkgray] (-2.23,-0.9)--(2.23,-0.9);
\fill[white] (-1.9,-0.9) circle (1.5pt);
\fill[white] (-0.9,-0.9) circle (1.5pt);
\fill[white] (-0.1,-0.9) circle (1.5pt);
\fill[white] (0.9,-0.9) circle (1.5pt);
\fill[white] (1.9,-0.9) circle (1.5pt);
\node at (3,-0.9){$\mathcal{F}_{r-d}$};

\draw[dashed,gray] (-2.03,-1.2)--(2.05,-1.2);
\draw[dashed,gray] (-2.03,-1.5)--(2.03,-1.5);
\draw[ultra thick,darkgray] (-2.03,-1.8)--(2.03,-1.8);
\fill[white] (-1.7,-1.8) circle (1.5pt);
\fill[white] (-0.6,-1.8) circle (1.5pt);
\fill[white] (0.3,-1.8) circle (1.5pt);
\fill[white] (1.7,-1.8) circle (1.5pt);
\node at (3,-2){$\vdots$};

\draw[dashed,gray] (-1.8,-2.1)--(1.83,-2.1);
\draw[dashed,gray] (-1.7,-2.4)--(1.73,-2.4);
\draw[ultra thick,darkgray] (-1.63,-2.7)--(1.63,-2.7);
\fill[white] (-1.2,-2.7) circle (1.5pt);
\fill[white] (-0.4,-2.7) circle (1.5pt);
\fill[white] (0.5,-2.7) circle (1.5pt);
\fill[white] (1.2,-2.7) circle (1.5pt);

\draw[dashed,gray] (-1.45,-3)--(1.45,-3);
\draw[dashed,gray] (-1.1,-3.3)--(1.12,-3.3);
\draw[ultra thick,darkgray] (-0.79,-3.6)--(0.79,-3.6);

\draw[thick,black] (0,0) ellipse (6em and 10em);
\end{tikzpicture}
\caption{Construction of a dense mutual-visibility set}
\label{main-construction}
\end{figure}

\begin{lemma}
\label{lem:lem2}
The set $M=M(\lambda)$ is a mutual-visibility set in $Q_n$ for any $\lambda\in\mathbb{Z}$.
\end{lemma}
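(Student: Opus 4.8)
The plan is to prove that any two vertices $A,B\in M$ see each other by explicitly routing a shortest $A$--$B$ path whose internal vertices all avoid $M$. The structural facts I would rely on are: (i) every vertex of $M$ lies in a \emph{selected} layer $\mathcal{L}_r$ with $r\equiv\lambda\pmod d$, and inside such a layer only in $\mathcal{F}_r$, so non-selected layers contain no vertex of $M$; and (ii) consecutive selected layers are exactly $d\ge 3$ apart, hence separated by a band of $d-1\ge 2$ consecutive non-selected layers. A shortest $A$--$B$ path is described by an ordering of the additions of the elements of $B\setminus A$ and the deletions of the elements of $A\setminus B$; the associated sequence of set sizes is a $\pm 1$ walk (the \emph{layer profile}), and I am free to choose both the order and, at each layer I pass through, which set I occupy, subject to inclusion constraints. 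Throughout I set $a=|A|\le b=|B|$ (by symmetry) and write $b-a=md$.

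The engine of the argument is a single \emph{crossing claim}: if $c$ is a selected layer with $d\le c\le n-d$ and if $P\subseteq R$ satisfy $|P|\le c-d$ and $|R|\ge c+d$, then the family $\{X:P\subseteq X\subseteq R,\ |X|=c\}$ is \emph{not} contained in $\mathcal{F}_c$. Indeed, choosing $P'$ with $P\subseteq P'\subseteq R$ and $|P'|=c-d$, and $R'$ with $P'\subseteq R'\subseteq R$ and $|R'|=c+d$, this family contains a full copy of the daisy $\mathcal{D}_c(2d,d)$, which cannot lie entirely in the $\mathcal{D}_c(2d,d)$-free family $\mathcal{F}_c$. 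Hence one can always cross layer $c$ at a vertex outside $\mathcal{F}_c$, that is, outside $M$, while respecting prescribed inclusions below and above.

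For the construction I distinguish the size gap $m$. When $m\in\{0,1\}$ the path need not meet any selected layer internally: I route it entirely inside one non-selected band, performing all required additions and deletions as an oscillating $\pm1$ profile that stays strictly between two consecutive selected layers and reaches the target layer only at the final step; this is possible precisely because the band has width $d-1\ge 2$. When $m\ge 2$ I concentrate all $q=|A\setminus B|$ deletions in the first slab: inside the band $\{a+1,\dots,a+d-1\}$ I oscillate to perform the $q$ deletions together with the matching additions, arriving at layer $a+d$ at a chosen vertex $X_1$ with $A\cap B\subseteq X_1\subseteq B$ and $X_1\notin\mathcal{F}_{a+d}$; such an $X_1$ exists by the crossing claim with $P=A\cap B$ and $R=B$. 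From $X_1$ the remaining path is monotone: I build an inclusion chain $X_1\subseteq X_2\subseteq\dots\subseteq X_{m-1}\subseteq B$ through the selected layers $a+2d,\dots,b-d$, choosing each $X_j\notin\mathcal{F}_{a+jd}$ by the crossing claim (with $P=X_{j-1}$ and $R=B$), and connecting consecutive chain vertices through the intervening non-selected layers. Every internal vertex then lies either in a non-selected layer or among the chosen $X_j\notin M$.

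The main obstacle is the bookkeeping of the first slab when $m\ge 2$: I must simultaneously realize the prescribed deletions, keep the layer profile strictly inside the width-$(d-1)$ band (never touching layer $a$ again, nor entering layer $a+d$ prematurely, since either would place an internal vertex in a selected layer), and still land exactly at the daisy-avoiding vertex $X_1$. Verifying that a suitable oscillating profile exists is where $d\ge 3$ is essential, and checking that the crossing claim applies at every selected layer the path meets is where the hypotheses $d\le c\le n-d$ enter, namely that the intermediate selected layers use genuinely $\mathcal{D}_c(2d,d)$-free families rather than full boundary layers. I would finally confirm the boundary behaviour: selected layers in $\{0,\dots,d-1\}\cup\{n-d+1,\dots,n\}$ are full, but the construction meets them only at the endpoints $A,B$, so no internal vertex is forced into such a layer.
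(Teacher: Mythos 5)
Your proposal is correct, and it shares the paper's two essential ingredients: your ``crossing claim'' is precisely the paper's Claim 2 (daisy-freeness of $\mathcal{F}_c$ yields a vertex of $\mathcal{L}_c\backslash\mathcal{F}_c$ between any prescribed $P\subseteq R$ with $\lvert P\rvert\le c-d$ and $\lvert R\rvert\ge c+d$), and your width-two oscillation inside a non-selected band is exactly the construction behind the paper's Claim 1. Where you genuinely diverge is in how these are assembled. The paper argues by induction on $r'-r$: it picks a single hole $C\in\mathcal{L}_{r+d}\backslash\mathcal{F}_{r+d}$ with $A\cap B\subseteq C\subseteq A\cup B$, recurses on the pairs $(A,C)$ and $(C,B)$, glues the two paths, and then needs a separate comparison argument (its Claim 3: any $C$ with $A\cap B\subseteq C\subseteq A\cup B$ lies on some shortest $A,B$-path) to certify that the glued path has length $\mathrm{dist}(A,B)$. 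You instead give a one-shot construction: all $q=\lvert A\backslash B\rvert$ deletions are executed in the first band, landing at a hole $X_1$ with $A\cap B\subseteq X_1\subseteq B$ (crossing claim applied with $R=B$ rather than $R=A\cup B$), after which the path climbs monotonically through a nested chain of holes $X_1\subseteq X_2\subseteq\cdots\subseteq X_{m-1}\subseteq B$. This buys two simplifications: shortestness is an immediate edge count, $2q+md=\lvert A\Delta B\rvert$, so no analogue of Claim 3 is needed, and no induction is needed; it also silently repairs a small imprecision in the paper, whose induction hypothesis is stated for vertices of $\mathcal{F}_r$ but is invoked at the holes $C\notin\mathcal{F}_{r+d}$. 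The price is the first-slab bookkeeping, which you correctly identify and which does go through for $d\ge3$ (step up, oscillate between layers $a+1$ and $a+2$ to pair each deletion with an addition, then climb to $X_1$). Your verification that every intermediate selected layer $a+d,\dots,b-d$ lies in $[d,n-d]$, so the crossing claim genuinely applies there even when $A$ or $B$ sits in a full boundary layer, is the right check and is sound. The only point to polish is the $m=0$ case with $k=\lvert A\backslash B\rvert\ge2$: the band above layer $a$ may leave $[0,n]$ (e.g.\ $a=n-1$), so one should oscillate below, which is always possible since $k\ge2$ forces $a\ge2$; this mirrors the symmetry step the paper itself leaves implicit in its Claim 1.
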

\begin{proof}[Proof of Lemma \ref{lem:lem2}]
For any two vertices $A, B\in M$ we shall verify that they are $M$-visible. Cicerone et al.\cite[Theorem 1]{cicerone2024cubelike} observed the following.

\vspace{1em}
\textit{Claim 1.} For any two vertices $A,B\in M \cap (\mathcal{L}\cup \mathcal{L}')$, where $\mathcal{L}$ and $\mathcal{L}'$ are two consecutive selected layers, there exists a shortest $A,B$-path in $Q_{n}$ that is internally disjoint from $\mathcal{L}\cup \mathcal{L}'$ and only goes through the layers between $\mathcal{L}$ and $\mathcal{L}'$.

\vspace{1em}
A proof of Claim 1 is included in the appendix for completeness. This implies that any two vertices of $M$ that are either from the same layer or from some consecutive selected layers ``see" each other. For any other two vertices of $M$, we shall argue that there exists a shortest path between them going through the ``holes'' in the selected layers in between, see Figure \ref{fig5}.
\begin{figure}[h]
\centering
\begin{tikzpicture}[scale=7/10]
\node at (-7.8,-1.5){\small$\mathcal{F}_{r}$};
\node at (-7.8,-0.5){\small$\mathcal{F}_{r'}$};
\node at (-7.8,0.5){\small$\mathcal{F}_{r_{3}}$};
\node at (-7.8,1.4){\small$\vdots$};
\node at (-7.8,2){\small$\mathcal{F}_{r_{k}}$};

\draw[ultra thick,gray] (-7,-1.5)--(10,-1.5);
\draw[dashed,gray] (-7,-1.2)--(10,-1.2);
\draw[dashed,gray] (-7,-0.9)--(10,-0.9);
\draw[ultra thick, gray] (-7,-0.5)--(10,-0.5);
\draw[dashed,gray] (-7,-0.2)--(10,-0.2);
\draw[dashed,gray] (-7,0.1)--(10,0.1);
\draw[ultra thick,gray] (-7,0.5)--(10,0.5);
\draw[dashed,gray] (-7,1)--(10,1);
\draw[dashed,gray] (-7,1.3)--(10,1.3);
\draw[dashed,gray] (-7,1.6)--(10,1.6);
\draw[ultra thick, gray] (-7,2)--(10,2);

\fill[white] (-3,-0.5) circle (3pt);
\fill[white] (0,0.5) circle (3pt);
\draw[thick,blue] (-6,-1.5)--(-5.5,-1.2)--(-5.2,-0.9)--(-4.9,-1.2)--(-4.6,-0.9)--(-4.3,-1.2)--(-4,-0.9)--(-3.7,-1.2)--(-3,-0.5)--(-2.5,-0.2)--(-2.3,0.1)--(-2,-0.2)--(-1.7,0.1)--(-1.4,-0.2)--(-1.1,0.1)--(-0.8,-0.2)--(0,0.5)--(0.8,1)--(1.6,0.8)--(2.4,1.3)--(3.2,1.1)--(4,1.6)--(4.8,1.4)--(6,2);

\fill (-6,-1.5) circle (3pt);
\node at (-6,-1.8){\small$A$};
\fill (6,2) circle (3pt);
\node at (6,2.3){\small$B$};
\end{tikzpicture}
\caption{A shortest $A,B$-path going through ``holes'' in the layers}
\label{fig5}
\end{figure}

\vspace{1em}
\textit{Claim 2.} For any $A,B\in2^{[n]}$, where $\lvert{A}\rvert \leq r-d$ and $\lvert{B}\rvert\geq r+d$, there exists $C\in\mathcal{L}_{r}\backslash\mathcal{F}_{r}$ such that $A\cap{B}\subseteq{C}\subseteq{A}\cup{B}$.

Indeed, since $\lvert{A\cap{B}}\rvert\leq\lvert{A}\rvert\leq{r-d}$ and $\lvert{A\cup{B}}\rvert\geq\lvert{B}\rvert\geq{r+d}$, there are some $A'\in\mathcal{L}_{r-d}$ and $B'\in\mathcal{L}_{r+d}$ with $A\cap{B}\subseteq{A'}\subseteq{B'}\subseteq{A\cup{B}}$. Since $\mathcal{F}_r$ is $\mathcal{D}_r(2d, d)$-free, for any $A'\in \mathcal{L}_{r-d}$ and $B'\in \mathcal{L}_{r+d}$ with $A'\subseteq B'$, the family $\mathcal{F}_r$ omits some $C\in \mathcal{L}_r$ satisfying $A'\subseteq C \subseteq B'$. In particular, $C\in\mathcal{L}_{r}\backslash\mathcal{F}_{r}$ and $A\cap{B}\subseteq{C}\subseteq{A}\cup{B}$. This proves Claim 2.

\vspace{1em}
\textit{Claim 3.} For any two vertices $A,B$ of $Q_n$ and any $C$ such that $A\cap B \subseteq C \subseteq A\cup B$, there is a shortest $A,B$-path in $Q_n$ containing $C$.

Note that $\mathrm{dist}(A,B)=\lvert{A\Delta B}\rvert$. Let $C$ be an arbitrary set with $A\cap B \subseteq C \subseteq A\cup B$. Then $\mathrm{dist}(A,C) =|A\Delta C|$ and $\mathrm{dist}(C,B)=|C\Delta B|$. Thus there is an $A,B$-walk $W$ through $C$ of length $|A\Delta C|+|C\Delta B|$. We have that $|A\Delta C|+|C\Delta B|= \left(\lvert{A\backslash C}\rvert+\lvert{C\backslash B}\rvert\right)+\left(\lvert{B\backslash C}\rvert+\lvert{C\backslash A}\rvert\right) =
\lvert{A\backslash B}\rvert + \lvert{B\backslash A}\rvert = \lvert{A\Delta B}\rvert$. Since the length of $W$ is $\mathrm{dist}(A,B)$, $W$ is a shortest $A,B$-path. This completes the proof of Claim 3.

\vspace{1em}
Now, consider $A\in\mathcal{F}_r$ and $B\in\mathcal{F}_{r'}$ for $r'\geq r$. Recall that $r'\equiv r\pmod{d}$. We shall prove by induction on $r'-r$, that there is a shortest $A,B$-path in $Q_n$, whose internal vertices are not in $M$ and are contained in layers between $\mathcal{L}_r$ and $\mathcal{L}_{r'}$.
For $r'-r=0$ or $r'-r=d$, such a path exists by Claim 1. If $r'-r\geq2d$, let $C\in\mathcal{L}_{r+d}\backslash\mathcal{F}_{r+d}$ such that $A\cap B\subseteq C\subseteq A\cup B$. Such a $C$ exists by Claim 2.

By induction, there is a shortest (in $Q_n$) $A,C$-path that is internally disjoint from $M$ and goes through the layers between $\mathcal{L}_{r}$ and $\mathcal{L}_{r+d}$. Similarly there is a shortest (in $Q_n$) $C,B$-path that is internally disjoint from $M$ and goes through the layers between $\mathcal{L}_{r+d}$ and $\mathcal{L}_{r'}$. The union $P$ of these two paths is an $A,B$-path  whose internal vertices are not in $M$. Moreover, by Claim 3 there is a shortest $A,B$-path $P'$ in $Q_n$ that contains $C$. We see that the length of the $A,C$-subpath of $P$ is at most the length of the $A,C$-subpath of $P'$. Similarly, the $C,B$-subpath of $P$ has length at most the length of $C,B$-subpath of $P'$. Hence $P$ is not longer than $P'$, i.e., $P$ is a desired shortest $A,B$-path.  
\end{proof}

\subsection{Proof of Theorem \ref{thm2}}
\label{proofofthm2}
\begin{proof}[Proof of Theorem \ref{thm2}]
By Lemma \ref{lem:lem2} we see that $Q_n$ contains pairwise disjoint mutual-visibility sets $M(0)$, $M(1)$, \ldots, $M(d-1)$ whose total size is $\sum_{r=d}^{n-d} \mathrm{ex}(n,\mathcal{D}_{r}(2d,d)) + 2\sum_{r=0}^{d-1} \binom{n}{r}$. Thus some $M(i)$ has size at least $\frac{1}{d}\left( \sum_{r=d}^{n-d} \mathrm{ex}(n,\mathcal{D}_{r}(2d,d)+ 2\sum_{r=0}^{d-1} \binom{n}{r}\right)$. 
\end{proof}

\subsection{Proof of Theorem \ref{thm1}}
\label{proofofthm1}
Let $r,s,t\in\mathbb{N}$ with $\min\{r,s\}\geq{t}$. The \textit{Tur\'{a}n density} of $(r,s,t)$-daisies is defined as
\begin{equation*}
\pi\left(\mathcal{D}_{r}(s,t)\right):=\lim_{n\to\infty}\frac{\mathrm{ex}\left(n,\mathcal{D}_{r}(s,t)\right)}{\binom{n}{r}}.
\end{equation*}

We shall use the following statement from the proof of Theorem 5 in the paper by Ellis, Ivan, and Leader\cite{ellis2024daisies}. Let $q\in\mathbb{N}$ be a fixed prime power. Then
\begin{equation}
\label{daisy}
\lim_{r\to\infty} \pi(n, D_{r}(q+2,2))\geq\prod_{k=1}^{\infty}(1-q^{-k}).
\end{equation}

We also need the following monotonicity statement mentioned in the literature, see, e.g., Keevash\cite{keevash2011survey} and Sidorenko\cite{sidorenko2024turan} and we include its proof in the appendix.
\begin{lemma}
\label{lem3}
Let $r,s,t$ be positive integers with $\min\{r,s\}\geq{t}$. Then for all $n\geq{r}$
\begin{equation*}
\frac{\mathrm{ex}\left(n,\mathcal{D}_{r}(s,t)\right)}{\binom{n}{r}}\geq\frac{\mathrm{ex}\left(n+1,\mathcal{D}_{r}(s,t)\right)}{\binom{n+1}{r}} \quad \text{and} \quad \pi\left(D_{r}(s,t)\right)\geq\pi\left(\mathcal{D}_{r+1}(s,t)\right).
\end{equation*}
\end{lemma}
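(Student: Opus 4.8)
The plan is to prove both inequalities by standard averaging arguments: deleting a single ground-set element to control the dependence on $n$, and passing to links to control the dependence on $r$.

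For the first inequality I would fix an extremal $\mathcal{D}_r(s,t)$-free family $\mathcal{F}\subseteq\binom{[n+1]}{r}$ with $|\mathcal{F}|=\mathrm{ex}(n+1,\mathcal{D}_r(s,t))$ and average over the $n+1$ ground sets $S=[n+1]\setminus\{x\}$. Each restriction $\mathcal{F}_S=\{F\in\mathcal{F}:F\subseteq S\}$ lives on $n$ points and is again $\mathcal{D}_r(s,t)$-free, so $|\mathcal{F}_S|\le\mathrm{ex}(n,\mathcal{D}_r(s,t))$. Double counting the pairs $(F,S)$ with $F\subseteq S$, and noting that each $F$ survives in exactly $n+1-r$ of the sets $S$ (those indexed by the $x\notin F$), yields $(n+1-r)\,\mathrm{ex}(n+1,\mathcal{D}_r(s,t))\le (n+1)\,\mathrm{ex}(n,\mathcal{D}_r(s,t))$. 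Dividing by $\binom{n+1}{r}$ and using $\binom{n+1}{r}/\binom{n}{r}=(n+1)/(n+1-r)$ gives the claimed monotonicity in $n$; in particular this shows that the normalized sequence is non-increasing and bounded below, so the limit defining $\pi(\mathcal{D}_r(s,t))$ exists.

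For the second inequality I would take a $\mathcal{D}_{r+1}(s,t)$-free family $\mathcal{G}\subseteq\binom{[n]}{r+1}$ and pass to its links $\mathcal{G}_x=\{G\setminus\{x\}:G\in\mathcal{G},\ x\in G\}\subseteq\binom{[n]\setminus\{x\}}{r}$. The key observation is that each link is $\mathcal{D}_r(s,t)$-free: if some $\mathcal{G}_x$ contained a daisy with kernel $A$ of size $r-t$ and outer set $B$ of size $r+s-t$, both avoiding $x$, then adjoining $x$ to every member would produce in $\mathcal{G}$ all $(r+1)$-sets between $A\cup\{x\}$ and $B\cup\{x\}$, that is, exactly a $\mathcal{D}_{r+1}(s,t)$, since $|A\cup\{x\}|=(r+1)-t$ and $|B\cup\{x\}|=(r+1)+s-t$. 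Hence $|\mathcal{G}_x|\le\mathrm{ex}(n-1,\mathcal{D}_r(s,t))$, and double counting the pairs $(G,x)$ with $x\in G$ gives $(r+1)\,\mathrm{ex}(n,\mathcal{D}_{r+1}(s,t))\le n\,\mathrm{ex}(n-1,\mathcal{D}_r(s,t))$. Dividing by $\binom{n}{r+1}$ and using $\binom{n}{r+1}=\frac{n}{r+1}\binom{n-1}{r}$ reduces this to $\frac{\mathrm{ex}(n,\mathcal{D}_{r+1}(s,t))}{\binom{n}{r+1}}\le\frac{\mathrm{ex}(n-1,\mathcal{D}_r(s,t))}{\binom{n-1}{r}}$; letting $n\to\infty$ then yields $\pi(\mathcal{D}_{r+1}(s,t))\le\pi(\mathcal{D}_r(s,t))$.

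The only genuinely non-routine step is the link observation in the second part: one must check that passing to a link simultaneously lowers the uniformity from $r+1$ to $r$ and the kernel size from $(r+1)-t$ to $r-t$ while leaving the petal parameter $s$ and the parameter $t$ untouched, so that a $\mathcal{D}_r(s,t)$ inside a link lifts to a genuine $\mathcal{D}_{r+1}(s,t)$ in $\mathcal{G}$. Everything else is elementary double counting combined with the two binomial identities $\binom{n+1}{r}/\binom{n}{r}=(n+1)/(n+1-r)$ and $\binom{n}{r+1}=\frac{n}{r+1}\binom{n-1}{r}$.
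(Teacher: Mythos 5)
Your proposal is correct and follows essentially the same route as the paper: the first inequality by averaging the restrictions of an extremal family over the $n+1$ ground sets obtained by deleting one point, and the second by passing to links and observing that a $\mathcal{D}_r(s,t)$ in a link lifts (by adjoining $x$) to a $\mathcal{D}_{r+1}(s,t)$ in the original family. The only cosmetic difference is an index shift in the second part (you work on $[n]$ where the paper works on $[n+1]$), and you spell out the link-lifting step that the paper dismisses as a simple observation.
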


\begin{proof}[Proof of Theorem \ref{thm1}]
Since the stated lower bound is trivial when $n\leq 3$, without loss of generality we assume $n>3$. 
We shall derive Theorem \ref{thm1} from Theorem \ref{thm2} with $d=3$. For that we need to find the lower bounds on $\mathrm{ex}(n,\mathcal{D}_{r}(6,3))$. Since every $\mathcal{D}_{r}(6,3)$ contains a copy of $\mathcal{D}_{r}(5,2)$, we have $\mathrm{ex}(n,\mathcal{D}_{r}(6,3))\geq \mathrm{ex}(n,\mathcal{D}_{r}(5,2))$. Applying \eqref{daisy} with $q=3$ gives
\begin{equation*}
\lim_{r\to\infty}\pi\left(\mathcal{D}_{r}(5,2)\right)\geq\prod_{k=1}^{\infty}(1-3^{-k})>0.56.
\end{equation*}
Accordingly, for $n\geq r\geq2$ Lemma \ref{lem3} implies that $\pi\left(\mathcal{D}_{r}(5,2)\right)>0.56$ and $\mathrm{ex}\left(n,\mathcal{D}_{r}(5,2)\right)>0.56\binom{n}{r}$. Now by Theorem \ref{thm2} with $d=3$, we obtain
\begin{equation*}
\mu(Q_{n})\geq  \frac{1}{3}\left(\sum_{r=3}^{n-3} \mathrm{ex}(n,\mathcal{D}_{r}(6,3))+ 2\sum_{r=0}^{2} \binom{n}{r}\right)\geq   \frac{1}{3}\left(\sum_{r=3}^{n-3} \mathrm{ex}(n,\mathcal{D}_{r}(5,2))+ 2\sum_{r=0}^{2} \binom{n}{r}\right)>  \frac{0.56}{3}  \cdot2^{n}.\qedhere  
\end{equation*}
\end{proof}

\section{Mutual-visibility coloring of $Q_{n}$}
\label{coloring}
\begin{proof}[Proof of Theorem \ref{new}]
Fix any positive integer $q$. We first show that there exists some $n_{0}>0$, such that $\chi_{\mu}(Q_{n})>q$ holds for all $n\geq{n_{0}}$. Given integers $s\geq{r}\geq1$, the \textit{$q$-color hypergraph Ramsey number} $R_{r}(s;q)$ is defined as the smallest integer $N$ such that any $q$-coloring of $\binom{[N]}{r}$ contains a monochromatic copy of $\binom{[s]}{r}$. Since $q$ is fixed in the beginning, we simply write $R_{r}(s)=R_{r}(s;q)$. Now let $n_0=q\cdot\left(R_{2}\circ{R_{3}}\circ\dots\circ{R_{2q}(2q)}\right)$. Let $n\geq{n_{0}}$ and  fix an arbitrary $q$-coloring of $V(Q_{n})$. Following an approach from \cite{axenovich2017boolean}, we iteratively apply the hypergraph Ramsey theorem and see that there is a sequence of sets $X_{2q}\subseteq{X_{2q-1}}\subseteq\dots\subseteq{X_{2}}\subseteq{X_{1}}\subseteq[n]$, such that $\binom{X_{r}}{r}$ is monochromatic for each $r\in[2q]$ and $\lvert{X_{2q}}\rvert=2q$. Let $Q$ be the subgraph of $Q_{n}$ induced by $2^{X_{2q}}$, in particular, $Q$ is a copy of $Q_{2q}$. We have that every layer of $Q$ is monochromatic. Since $Q$ contains $2q+1$ layers and there are only $q$ colors, by the pigeonhole principle at least three layers of $Q$ receive the same color. Let $\mathcal{L}_{i}\cap V(Q)$, $\mathcal{L}_{j}\cap V(Q)$, and $\mathcal{L}_{k}\cap V(Q)$ be the three layers of $Q$ contained in the same color class that we denote by $M$, where $i<j<k$. Consider two vertices $A\in\mathcal{L}_{i}\cap V(Q)$ and $B\in\mathcal{L}_{k}\cap V(Q)$ with $A\subseteq{B}$. Observe that every shortest $A,B$-path in $Q_n$ goes through some vertex $C\in \mathcal{L}_j$ satisfying $A\subseteq{C}\subseteq{B}$. As $Q$ is a copy of a hypercube, all such vertices $C$ are contained in $\mathcal{L}_{j}\cap V(Q)$. Namely, every shortest $A$-$B$ path must go through the layer $\mathcal{L}_{j}\cap V(Q)$, so $M$ is not a mutual-visibility set. This implies that $\chi_{\mu}(Q_{n})>q$.

\vspace{1em}
Now we shall prove the upper bound on $\chi_\mu(Q_n)$. All logarithms below are base $2$. Assume that $n$ is sufficiently large and let $d=\lfloor{\log\log{n}}\rfloor\geq3$. Consider a layer $\mathcal{L}_r$ and let $\lambda \in \{0, 1, \ldots, d-1\}$ such that $r \equiv \lambda \pmod{d}$. If $d\leq r\leq n-d$, we color $\mathcal{L}_r$ uniformly at random in two colors $(\lambda, 1)$ and $(\lambda,2)$. If $r\leq d-1$ or $r\geq n-d+1$ we color $\mathcal{L}_r$ simply with one color $(\lambda,1)$. See Figure \ref{coloring} for an illustration.
\begin{figure}[h]
\centering
\begin{tikzpicture}[scale=0.7]
\node at (0,4.3){$[n]$};
\node at (0,-4.3){$\emptyset$};

\draw[ultra thick,blue] (-2.3,0)--(-2,0);
\draw[ultra thick,red] (-2,0)--(-1,0);
\draw[ultra thick,blue] (-1,0)--(0,0);
\draw[ultra thick,red] (0,0)--(1,0);
\draw[ultra thick,blue] (1,0)--(2,0);
\draw[ultra thick,red] (2,0)--(2.3,0);
\node at (3,0){$\mathcal{F}_{r}$};

\draw[ultra thick,orange] (-2.3,0.3)--(-1.5,0.3);
\draw[ultra thick,violet] (-1.5,0.3)--(-0.8,0.3);
\draw[ultra thick,orange] (-0.8,0.3)--(0,0.3);
\draw[ultra thick,violet] (0,0.3)--(0.8,0.3);
\draw[ultra thick,orange] (0.8,0.3)--(1.5,0.3);
\draw[ultra thick,violet] (1.5,0.3)--(2.3,0.3);

\draw[ultra thick,olive] (-2.3,0.6)--(-1.9,0.6);
\draw[ultra thick,teal] (-1.9,0.6)--(-0.9,0.6);
\draw[ultra thick,olive] (-0.9,0.6)--(-0.1,0.6);
\draw[ultra thick,teal] (-0.1,0.6)--(0.9,0.6);
\draw[ultra thick,olive] (0.9,0.6)--(1.9,0.6);
\draw[ultra thick,teal] (1.9,0.6)--(2.3,0.6);

\node at (0,1.15){$\vdots$};
\draw[ultra thick,blue] (-2.15,1.4)--(-1.7,1.4);
\draw[ultra thick,red] (-1.7,1.4)--(-0.6,1.4);
\draw[ultra thick,blue] (-0.6,1.4)--(0.3,1.4);
\draw[ultra thick,red] (0.3,1.4)--(1.7,1.4);
\draw[ultra thick,blue] (1.7,1.4)--(2.15,1.4);
\node at (3,1.4){$\mathcal{F}_{r+d}$};

\draw[ultra thick,orange] (-2.07,1.7)--(-1.5,1.7);
\draw[ultra thick,violet] (-1.5,1.7)--(-0.8,1.7);
\draw[ultra thick,orange] (-0.8,1.7)--(0,1.7);
\draw[ultra thick,violet] (0,1.7)--(0.8,1.7);
\draw[ultra thick,orange] (0.8,1.7)--(1.5,1.7);
\draw[ultra thick,violet] (1.5,1.7)--(2.07,1.7);

\draw[ultra thick,olive] (-1.97,2)--(-1.4,2);
\draw[ultra thick,teal] (-1.4,2)--(-0.9,2);
\draw[ultra thick,olive] (-0.9,2)--(-0.1,2);
\draw[ultra thick,teal] (-0.1,2)--(0.9,2);
\draw[ultra thick,olive] (0.9,2)--(1.4,2);
\draw[ultra thick,teal] (1.4,2)--(1.97,2);
\node at (0,3){$\vdots$};

\draw[ultra thick,blue] (-2.15,-1.4)--(-1.7,-1.4);
\draw[ultra thick,red] (-1.7,-1.4)--(-0.6,-1.4);
\draw[ultra thick,blue] (-0.6,-1.4)--(0.3,-1.4);
\draw[ultra thick,red] (0.3,-1.4)--(1.7,-1.4);
\draw[ultra thick,blue] (1.7,-1.4)--(2.15,-1.4);
\node at (3,-1.4){$\mathcal{F}_{r-d}$};

\draw[ultra thick,orange] (-2.2,-1.1)--(-1.5,-1.1);
\draw[ultra thick,violet] (-1.5,-1.1)--(-0.8,-1.1);
\draw[ultra thick,orange] (-0.8,-1.1)--(0,-1.1);
\draw[ultra thick,violet] (0,-1.1)--(0.8,-1.1);
\draw[ultra thick,orange] (0.8,-1.1)--(1.5,-1.1);
\draw[ultra thick,violet] (1.5,-1.1)--(2.2,-1.1);

\draw[ultra thick,olive] (-2.25,-0.8)--(-1.9,-0.8);
\draw[ultra thick,teal] (-1.9,-0.8)--(-0.9,-0.8);
\draw[ultra thick,olive] (-0.9,-0.8)--(-0.1,-0.8);
\draw[ultra thick,teal] (-0.1,-0.8)--(0.9,-0.8);
\draw[ultra thick,olive] (0.9,-0.8)--(1.9,-0.8);
\draw[ultra thick,teal] (1.9,-0.8)--(2.25,-0.8);
\node at (0,-0.25){$\vdots$};
\node at (0,-2.3){$\vdots$};

\draw[thick,black] (0,0) ellipse (6em and 10em);
\end{tikzpicture}
\caption{Each color class is a mutual-visibility set}
\label{coloring}
\end{figure}

We claim that with positive probability every color class is a mutual-visibility set. By Lemma \ref{lem:lem2}, it is sufficient to prove that for each $r$ with $d\leq r\leq n-d$, with positive probability every color class in $\mathcal{L}_r$ is $D_r(2d,d)$-free. Fix such an $r$ and let $\mathcal{J}=\mathcal{J}_r(d)$ be the collection of all copies of daisies $D_r(2d,d)$ in the layer $\mathcal{L}_r$.

For any $J\in \mathcal{J}$, let $\mathcal{E}(J)$ be the event that $J$ is monochromatic. Let $p=p(J)$ be the probability that $\mathcal{E}(J)$ occurs. Since $\lvert{J}\rvert=\binom{2d}{d}$,
\begin{equation*}
p=2^{-\lvert{J}\rvert+1}=2^{-\binom{2d}{d}+1}.
\end{equation*}
Let $g=g(J)=\lvert\left\{J'\in \mathcal{J}\backslash \{J\}:\,J'\cap J\neq \emptyset\right\}\rvert$. 
We have that 
\begin{equation*}
g \leq \binom{2d}{d}\binom{r}{d}\binom{n-r}{d}\leq\binom{2d}{d}\binom{n}{2d}<\left(\frac{en}{d}\right)^{2d}.    
\end{equation*}
Indeed, each member of $J$ belongs to $\binom{r}{d}\binom{n-r}{d}$ distinct copies of $D_r(2d,d)$. Thus $\mathcal{E}(J)$ is mutually independent of all but at most $g$  other events $\mathcal{E}(J')$, $J'\in \mathcal{J}$.
Now since
\begin{equation*}
e\cdot p\cdot(g+1)\leq{2^{1-\binom{2d}{d}}}n^{2d}< 2^{-(\log{n})^{2}/\sqrt{100\log\log{n}}+2\log{n}\log\log{n}}<1,    
\end{equation*}
by Lov\'{a}sz Local Lemma\cite{erdos1973local} (see also\cite[Theorem 1.5]{spencer1977local}), with positive probability there exists no monochromatic copy of $D_r(2d,d)$ in $\mathcal{L}_r$.
\end{proof}

\vspace{0.7em}\noindent
\textit{Remark.} Although the proof above shows that $\chi_{\mu}(Q_{n})$ grows with $n$, the dependence of the lower bound on $n$ is quite unsatisfactory. In fact, it is even difficult to express the lower bound on $\chi_{\mu}(Q_{n})$ in terms of $n$, since for $\chi_{\mu}(Q_{n})>q$ we require $n$ to be a composition of $q$-color hypergraph Ramsey numbers.

\section{Total mutual-visibility in $Q_{n}$}
\label{totalmutual}
In this section it is helpful to regard the hypercube $Q_{n}$ as a graph on the vertex set $\{0,1\}^{n}$. For $x,y\in\{0,1\}^{n}$, the \textit{Hamming distance} between $x$ and $y$, corresponding to the distance in the graph $Q_n$, denoted $\mathrm{dist}(x,y)$, is the number of coordinates where $x$ and $y$ differ. For a set $M\subseteq \{0,1\}^{n}$, we say that $M$ \textit{avoids Hamming distance $d$} if for any two vertices $x,y\in M$, $\mathrm{dist}(x,y)\neq d$. Bujt\'{a}s et al.\cite{bujta2023total} showed the following characterization of a total mutual-visibility set in $Q_{n}$.
\begin{lemma}[{\hspace{-0.3mm}\cite[Theorem 6]{bujta2023total}}]
\label{lem5}
$M\subseteq\{0,1\}^{n}$ is a total mutual-visibility set in $Q_{n}$ if and only if $M$ avoids the Hamming distance $2$.
\end{lemma}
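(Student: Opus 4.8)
The plan is to prove the two implications separately, exploiting the structure of shortest paths in $Q_n$: a shortest $u,v$-path is obtained by flipping, one at a time, the $\mathrm{dist}(u,v)$ coordinates in which $u$ and $v$ differ. Consequently every internal vertex lies in the interval subcube spanned by $u$ and $v$, and the crucial feature I will use is that, at any point of such a path, the candidate ``next'' vertices (the neighbours that still lie on a shortest path toward $v$) are pairwise at Hamming distance exactly $2$.

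For the direction that distance-$2$-avoidance implies total mutual-visibility, I would argue by a greedy construction. Fix arbitrary $u,v\in\{0,1\}^{n}$ and set $k=\mathrm{dist}(u,v)$; the cases $k\le 1$ are immediate, since the single vertex or single edge has no internal vertices. For $k\ge 2$ I build a shortest path $u=w_{0},w_{1},\dots,w_{k}=v$ by flipping one not-yet-flipped differing coordinate at each step. The key observation is that, at a vertex $w_{\ell}$ lying at distance $\ell$ from $u$, the $k-\ell$ forward neighbours toward $v$ differ pairwise in exactly two coordinates, hence are pairwise at Hamming distance $2$; since $M$ avoids distance $2$, at most one of them belongs to $M$. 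Therefore, whenever $k-\ell\ge 2$ — which is precisely the range $\ell\le k-2$, i.e.\ the range in which the next vertex $w_{\ell+1}$ is still internal — there is a choice of $w_{\ell+1}\notin M$. The final flip is forced and lands on $v$ itself, an endpoint, so it is unrestricted. This yields a shortest $u,v$-path internally disjoint from $M$, proving that $u$ and $v$ are $M$-visible and hence that $M$ is a total mutual-visibility set.

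For the converse I argue by contraposition. Suppose $M$ fails to avoid distance $2$, so there are $x,y\in M$ with $\mathrm{dist}(x,y)=2$, differing in two coordinates $i$ and $j$. Their only two common neighbours are the vertex $z_{1}$ obtained from $x$ by flipping coordinate $i$ and the vertex $z_{2}$ obtained from $x$ by flipping coordinate $j$; these satisfy $\mathrm{dist}(z_{1},z_{2})=2$, as they differ exactly in coordinates $i$ and $j$. Now every shortest $z_{1},z_{2}$-path has length $2$ and thus exactly one internal vertex, which is a common neighbour of $z_{1}$ and $z_{2}$, namely either $x$ or $y$ — both in $M$. Hence $z_{1}$ and $z_{2}$ are not $M$-visible, so $M$ is not a total mutual-visibility set, completing the contrapositive.

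The step I expect to be the crux is the ``at most one forward neighbour in $M$'' observation driving the greedy construction: everything rests on the fact that the candidate next-vertices are mutually at distance $2$, which converts the hypothesis into an immediate packing bound and lets the greedy procedure run to completion without backtracking. The only other point requiring care is the bookkeeping that the inequality $k-\ell\ge 2$ holds exactly for those steps producing an internal (non-endpoint) vertex, so that the single forced final flip onto $v$ creates no obstruction.
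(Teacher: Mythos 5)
Your proof is correct. Note that the paper itself does not prove this lemma; it is quoted as Theorem 6 of Bujt\'{a}s, Klav\v{z}ar, and Tian \cite{bujta2023total}, so there is no in-paper argument to compare against. Your two directions are both sound: the necessity argument (two vertices $x,y\in M$ at distance $2$ force their two common neighbours $z_1,z_2$ to have all shortest connecting paths blocked, since the only common neighbours of $z_1$ and $z_2$ are $x$ and $y$) is the standard one, and your sufficiency argument via the greedy construction is clean. The crux you identify is exactly right: the forward neighbours at any stage are pairwise at Hamming distance $2$, so distance-$2$-avoidance gives a packing bound of one, and the bookkeeping that a choice is only needed while $k-\ell\ge 2$ (i.e.\ while the next vertex is internal) closes the argument. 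This matches the argument given in the cited source in spirit, so nothing further is needed.
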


By taking a random subset of $\{0,1\}^{n}$ and applying the deletion method to the pairs of vertices with Hamming distance $2$, Bujt\'{a}s et al.\cite{bujta2023total} proved the lower bound $\mu_{t}(Q_{n})\geq2^{n-2}/(n^{2}-n)$ for all $n\geq2$. The same bound (with a slightly better constant) can also be derived from a greedy algorithm. On the other hand, our improvement on the lower bound comes from a coding theory perspective.

A subset $M$ of $\{0,1\}^{n}$ is a \textit{single error correcting code}, if the Hamming distance between any two elements of $M$ is at least $3$. Let for $x\in\{0,1\}^{n}$, $\mathcal{B}(x,1)$ be the set of all elements from $\{0,1\}^{n}$ at Hamming distance at most $1$ from $x$, i.e., a unit ball with center $x$. Note that $\lvert\mathcal{B}(x,1)\rvert=n+1$ and if $M$ is a single error correcting code, then for any distinct $x,y\in{M}$, $\mathcal{B}(x,1)\cap\mathcal{B}(y,1)=\emptyset$. Thus, the size of $M$ is at most $2^{n}/(n+1)$. In particular, if $\lvert{M}\rvert=2^{n}/(n+1)$, then $M$ is called a \textit{perfect single error correcting code}. Hamming\cite{hamming1950error} showed that perfect codes exist for certain values of $n$.
\begin{lemma}[Hamming\cite{hamming1950error}]
\label{lem6}
Let $n=2^{m}-1$ for some integer $m\geq2$. Then there exists a perfect single error correcting code $M\subseteq\{0,1\}^{n}$.
\end{lemma}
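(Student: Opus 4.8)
The plan is to give the classical Hamming construction via a parity check matrix over the two-element field $\mathbb{F}_2$, identifying $\{0,1\}^n$ with the vector space $\mathbb{F}_2^n$. The key numerical coincidence to exploit is that $n=2^m-1$ is exactly the number of nonzero vectors in $\mathbb{F}_2^m$. Accordingly, I would let $H$ be the $m\times n$ matrix over $\mathbb{F}_2$ whose $n$ columns are precisely the distinct nonzero vectors of $\mathbb{F}_2^m$, listed in an arbitrary fixed order, and take the candidate code to be the null space
\begin{equation*}
M=\{x\in\mathbb{F}_2^n:\,Hx=0\},
\end{equation*}
a linear subspace of $\mathbb{F}_2^n$.

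First I would pin down the size of $M$. Since the $m$ standard basis vectors $e_1,\dots,e_m$ are nonzero, they all occur among the columns of $H$, so $H$ has rank exactly $m$. By rank--nullity, $\dim M=n-m$, whence $\lvert M\rvert=2^{n-m}=2^n/2^m=2^n/(n+1)$, using $n+1=2^m$. This already matches the size a perfect code must have.

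Next I would verify that $M$ is a single error correcting code, i.e.\ that the Hamming distance between any two distinct codewords is at least $3$. Because $M$ is linear, its minimum distance equals the minimum Hamming weight of a nonzero codeword (over $\mathbb{F}_2$ one has $\mathrm{dist}(x,y)=\mathrm{wt}(x+y)$ with $x+y\in M$), so it suffices to exclude nonzero codewords of weight $1$ and $2$. A weight-one codeword with support $\{i\}$ would force the $i$-th column of $H$ to vanish, contradicting that every column is nonzero; a weight-two codeword with support $\{i,j\}$ would force the $i$-th and $j$-th columns to sum to zero, i.e.\ to coincide, contradicting that the columns are pairwise distinct. Hence the minimum weight, and thus the minimum distance, is at least $3$.

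Combining the two points, $M$ is a single error correcting code of size exactly $2^n/(n+1)$, which by definition (and by the sphere-packing bound $\lvert M\rvert\leq 2^n/(n+1)$ already recorded above) is a perfect single error correcting code. The argument is entirely classical and poses no genuine obstacle; the only step warranting care is the clean translation, via linearity of the code, between the combinatorial requirement that the minimum distance be at least $3$ and the purely linear-algebraic fact that the columns of $H$ are nonzero and pairwise distinct.
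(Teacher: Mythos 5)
Your proof is correct and complete: the paper does not prove Lemma \ref{lem6} at all but simply cites Hamming's 1950 paper, and your construction (the kernel of the $m\times n$ parity-check matrix whose columns are the nonzero vectors of $\mathbb{F}_2^m$, with the rank--nullity count giving $\lvert M\rvert=2^n/(n+1)$ and the nonzero-and-distinct-columns argument giving minimum distance at least $3$) is precisely the classical argument that the citation stands in for. Nothing is missing.
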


The \textit{weight} of an element $x\in\{0,1\}^{n}$ is defined to be the number of $1$'s in $x$. Given $w\in[0,n]$, let $A(n,4,w)$ denote the largest size of a set $M$ of elements from $\{0,1\}^{n}$ with weight $w$ and pairwise Hamming distance at least $4$. Employing a simple combinatorial argument, Graham and Sloane\cite{graham1980lower} proved the following lower bound on $A(n,4,w)$. We include the proof here for completeness and later discussion.
\begin{lemma}[{\hspace{-0.3mm}\cite[Theorem 1]{graham1980lower}}]
\label{lem7}
Let $n\in\mathbb{N}$ and $w\in[0,n]$. Then $A(n,4,w)\geq\binom{n}{w}/n$.
\end{lemma}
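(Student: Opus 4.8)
The plan is to use the classical weighted-coset argument of Graham and Sloane, whose starting point is an elementary parity observation. First I would note that among vectors of a fixed weight $w$, the Hamming distance between any two is always even: if $x$ and $y$ both have weight $w$, then the number of coordinates where $x$ has a $1$ and $y$ has a $0$ equals the number where the reverse holds, so $\mathrm{dist}(x,y)$ is twice this common value. Consequently, a family of weight-$w$ vectors has pairwise Hamming distance at least $4$ if and only if it contains no two vectors at Hamming distance exactly $2$. This reduces the task to producing a large family of weight-$w$ vectors avoiding Hamming distance $2$.

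Next I would set up a partition of all $\binom{n}{w}$ weight-$w$ vectors into $n$ classes by means of a linear invariant. To each $x=(x_1,\dots,x_n)\in\{0,1\}^n$ assign the residue $\sigma(x):=\sum_{i=1}^{n} i\,x_i \pmod{n}$, and group the weight-$w$ vectors according to the value of $\sigma(x)\in\{0,1,\dots,n-1\}$. The heart of the argument is then to verify that each class is a valid code. Suppose $x$ and $y$ are distinct weight-$w$ vectors with $\mathrm{dist}(x,y)=2$. Since they have equal weight, $y$ is obtained from $x$ by ``moving a single $1$'': there are positions $i\neq j$ with $x_i=1,\ x_j=0$ and $y_i=0,\ y_j=1$, all other coordinates being equal. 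Hence $\sigma(x)-\sigma(y)\equiv i-j \pmod{n}$, which is nonzero modulo $n$ because $i,j\in[n]$ and $i\neq j$ force $0<\lvert i-j\rvert\leq n-1$. Therefore $\sigma(x)\neq\sigma(y)$, so $x$ and $y$ lie in different classes, and within a single class no two vectors are at distance $2$.

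Finally I would conclude by the pigeonhole principle: the $\binom{n}{w}$ weight-$w$ vectors are distributed among $n$ classes, so some class has size at least $\binom{n}{w}/n$; by the previous step this class has pairwise Hamming distance at least $4$, which yields $A(n,4,w)\geq\binom{n}{w}/n$ as claimed.

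The main obstacle — indeed essentially the only nontrivial point — is identifying the invariant $\sigma$ and checking that a distance-$2$ move always changes it while using only $n$ classes. The evenness observation is what makes ``distance $\geq 4$'' equivalent to ``no distance-$2$ pair,'' and the weighting $\sum_i i\,x_i \bmod n$ is exactly sensitive enough to separate any two vectors differing by a single shifted $1$, since such a difference registers as $i-j\not\equiv 0\pmod n$.
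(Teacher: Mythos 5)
Your proof is correct and follows essentially the same route as the paper: the same invariant $\sum_i i\,x_i \bmod n$ (the paper's $\lVert x\rVert$), the same partition of the weight-$w$ slice into $n$ residue classes, the same parity observation reducing ``distance $\geq 4$'' to ``no distance-$2$ pair,'' and the same averaging conclusion. In fact you spell out the one step the paper leaves implicit, namely that a distance-$2$ pair of equal weight shifts the invariant by $i-j\not\equiv 0\pmod{n}$.
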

\begin{proof}
For $\lambda\in[n]$ we define $\mathcal{C}_{w,\lambda}$ to be the set of $x\in\{0,1\}^{n}$ with weight $w$ and $\lVert{x}\rVert\equiv\lambda\pmod{n}$, where $\lVert{x}\rVert$ denotes the sum of non-zero entries' positions in $x$. For example, $\lVert(0,1,1)\rVert=2+3=5$. Because all elements in $\mathcal{C}_{w,\lambda}$ have the same weight, the pairwise Hamming distance in $\mathcal{C}_{w,\lambda}$ is even. If $x,y\in\mathcal{C}_{w,\lambda}$ and $\mathrm{dist}(x,y)=2$, then $\lVert{x}\rVert\not\equiv\lVert{y}\rVert\pmod{n}$, a contradiction. Namely, the pairwise Hamming distance in $\mathcal{C}_{w,\lambda}$ is at least $4$. Furthermore, $\lvert\bigcup_{\lambda\in[n]}\mathcal{C}_{w,\lambda}\rvert=\binom{n}{w}$, since each element from $\{0,1\}^{n}$ of weight $w$ is in $\mathcal{C}_{w,\lambda}$, for some $\lambda$. Let $\mathcal{C}_{w}$ be the largest among all $C_{w,\lambda}$. By averaging we have $\lvert\mathcal{C}_{w}\rvert\geq\binom{n}{w}/n$.
\end{proof}

\vspace{0.03em}\subsection{Proof of Theorem \ref{thm4}}
\begin{proof}[Proof of Theorem \ref{thm4}]
We start by showing the stated upper bound on $\mu_{t}(Q_{n})$. Let $M\subseteq\{0,1\}^{n}$ be a maximum total mutual-visibility set in $Q_{n}$. By Lemma \ref{lem5} we know that $M$ avoids the Hamming distance $2$. We define a bipartite graph $G$ on the vertex set $M\cup\left(\{0,1\}^{n}\backslash{M}\right)$, such that $x\in{M}$ and $y\in\{0,1\}^{n}\backslash{M}$ are adjacent if and only if $\mathrm{dist}(x,y)=2$. We shall count the number of edges in $G$. First, for every $x\in{M}$, there are $\binom{n}{2}$ vertices in $\{0,1\}^{n}$ with Hamming distance $2$ from $x$, that is, the number of ways to flip two coordinates of $x$. Since $M\subseteq\{0,1\}^{n}$ avoids the Hamming distance $2$, all such vertices are in $\{0,1\}^{n}\backslash{M}$, namely,
\begin{equation}
\label{equ3}
e(G)=\sum_{x\in{M}}d_{G}(x)=\lvert{M}\rvert\binom{n}{2}.
\end{equation}
On the other hand, we claim that $d_{G}(y)\leq\lfloor\frac{n}{2}\rfloor$ holds for every $y\in\{0,1\}^{n}\backslash{M}$. Suppose there exists some $y\in\{0,1\}^{n}\backslash{M}$ with $d_{G}(y)>\lfloor\frac{n}{2}\rfloor$. Each neighbor of $y$ is obtained by flipping two coordinates of $y$. Thus, by the pigeonhole principle, there must exist two neighbors $u,v\in{M}$ of $y$ and distinct $i,j,k\in[n]$, such that $u$ is obtained by flipping the $i$-th and $k$-th coordinates of $y$, and $v$ is obtained by flipping the $j$-th and $k$-th coordinates of $y$. But then the Hamming distance between $u$ and $v$ is exactly $2$, a contradiction. Therefore,
\begin{equation*}
e(G)=\sum_{y\in\{0,1\}^{n}\backslash{M}}d_{G}(y)\leq\frac{\left(2^{n}-\lvert{M}\rvert\right)n}{2}.
\end{equation*}
Together with \eqref{equ3}, we deduce the upper bound $\mu_{t}(Q_{n})=\lvert{M}\rvert\leq\frac{2^{n}}{n}$.

\vspace{1em}
Now we prove the lower bounds on $\mu_{t}(Q_{n})$.\\
\textbf{Case 1:} $n=2^{m}-1$ with $m\in\mathbb{N}$.\\
If $m=1$, then $Q_{n}$ is isomorphic to $K_{2}$. In this case, the whole vertex set of $Q_{n}$ is a total mutual-visibility set, hence, $\mu_{t}(Q_{n})\geq2^{n}/(n+1)$. If $m\geq2$, then Lemma \ref{lem6} implies that there exists a subset $M\subseteq\{0,1\}^{n}$ with pairwise Hamming distance at least $3$ and $\lvert{M}\rvert=2^{n}/(n+1)$. By Lemma \ref{lem5}, such an $M$ is a total mutual-visibility set in $Q_{n}$, thus, $\mu_{t}(Q_{n})\geq2^{n}/(n+1)$.\\
\\\textbf{Case 2:} $n$ is an arbitrary natural number.\\
For each $w\in[0,n]$, consider a set $\mathcal{C}_{w}$ of elements from $\{0,1\}^{n}$ with weight $w$ and pairwise Hamming distance at least $4$, such that $\lvert\mathcal{C}_{w}\rvert\geq\binom{n}{w}/n$. Such a set exists by Lemma \ref{lem7}. Furthermore, we define
\begin{equation*}
\mathcal{A}:=\bigcup_{\substack{w\in[0,n]\\w\in\{0,1\}\pmod{4}}}\mathcal{C}_{w}\quad\text{and}\quad\mathcal{B}:=\bigcup_{\substack{w\in[0,n]\\w\in\{2,3\}\pmod{4}}}\mathcal{C}_{w}.
\end{equation*}
It is easy to see that $\mathcal{A}$ avoids the Hamming distance $2$. Indeed, for any two elements $x,y\in\{0,1\}^{n}$ with weight $w(x)\leq{w(y)}$, if $\mathrm{dist}(x,y)=2$, then either $w(x)=w(y)$ or $w(x)-w(y)=2$. If $w(x)=w(y)$, then $x$ and $y$ must belong to the same $\mathcal{C}_{w}$, meaning that $\mathrm{dist}(x,y)\geq4$. If $w(x)-w(y)=2$, then $w(x)\in\{0,1\}\pmod{4}$ and $w(y)\in\{0,1\}\pmod{4}$ can not hold simultaneously, a contradiction.\\
By symmetry, $\mathcal{B}$ also avoids the Hamming distance $2$. Then, Lemma \ref{lem5} ensures that both $\mathcal{A}$ and $\mathcal{B}$ are total mutual-visibility sets in $Q_{n}$. Furthermore, since $\lvert\mathcal{A}\cup\mathcal{B}\rvert=\sum_{w=0}^{n}\lvert\mathcal{C}_{w}\rvert\geq2^{n}/n$, we have $\mu_{t}(Q_{n})\geq\max\left\{\lvert\mathcal{A}\rvert,\lvert\mathcal{B}\rvert\right\}\geq2^{n-1}/n$.
\end{proof}

\section{Concluding remarks}
\label{conclusions}
In this paper we establish tight bounds (up to a multiplicative constant) on $\mu(Q_{n})$ and $\mu_{t}(Q_{n})$. Moreover, our proofs provide explicit constructions of nearly optimal mutual-visibility and total mutual-visibility sets in hypercubes.

A natural generalisation of the hypercube $Q_n$ is the so-called \textit{Hamming graph} $H(n,q)$ with $n,q\in\mathbb{N}$, which is defined as a graph on the vertex set $[q]^{n}$, where two vertices $x,y\in[q]^{n}$ form an edge if and only if their Hamming distance equals $1$. Bujt\'{a}s et al.\cite{bujta2023total} proved non-trivial bounds for $\mu_{t}\left(H(n,q)\right)$. It might be possible to improve their lower bound using certain probabilistic techniques and non-binary error correcting codes, see, e.g., Lindstr\"{o}m\cite{lindstrom1969group} and Sch\"{o}nheim\cite{schonheim1968linear}. On the other hand, the parameter $\mu\left(H(n,q)\right)$ for $n>2$ has not been studied to the best of our knowledge and might be interesting to investigate.

For the coloring version of the mutual-visibility problem, we have proved that $\omega(1)=\chi_{\mu}(Q_{n})=O(\log\log{n})$. It would be interesting to determine the order of magnitude of $\chi_{\mu}(Q_{n})$. In addition, $Q_{n}$ is the only known class of graphs with $\chi_{\mu}(Q_{n})\gg\lvert{V(Q_{n})}\rvert/\mu(Q_{n})$. Are there other classes of graphs that exhibit the same behavior?

Lastly, we note that our proofs in Section \ref{totalmutual} can be adapted to settle the coloring version of the total mutual-visibility problem for hypercubes. Let $\chi_{\mu}^{\mathrm{total}}(G)$ be the smallest number of colors needed to color $V(G)$ such that each color class is a total mututal-visibility set in $G$. In the proof of Lemma \ref{lem7} we have actually partitioned every layer of $Q_{n}$ into $n$ total mutual-visibility sets. Combining this with the lower bound proof of Theorem \ref{thm4}, one can show that $\chi_{\mu}^{\mathrm{total}}(Q_{n})\leq2n$. On the other hand, the upper bound on $\mu_{t}(Q_{n})$ implies that $\chi_{\mu}^{\mathrm{total}}(Q_{n})\geq{n}$. The function $\chi_{\mu}^{\mathrm{total}}(G)$ has not been studied elsewhere and could be of independent interest.

\vspace{3em}
\textbf{Acknowledgements.} The research was partially supported by the DFG grant FKZ AX 93/2-1. The first author thanks Gabriele Di Stefano for introducing her to the notion of mutual-visibility in graphs. The authors also thank Felix Christian Clemen and Oleg Pikhurko for helpful discussions as well as the anonymous referees for their constructive comments.

\section*{Appendix}
\label{appendix}
\newtheorem*{claim1}{\textit{Claim 1}}
\begin{claim1}
Let $n\in\mathbb{N}$ and $r,r'\in \{0,\ldots, n\}$ with $r+3\leq{r'}$. For any distinct vertices $A,B\in\mathcal{L}_{r}\cup\mathcal{L}_{r'}$, there exists a shortest $A,B$-path in $Q_{n}$ that is internally disjoint from $\mathcal{L}_{r}\cup\mathcal{L}_{r'}$ and only goes through the layers between $\mathcal{L}_{r}$ and $\mathcal{L}_{r'}$.
\end{claim1}
\begin{proof}[Proof of Claim 1]
Fix arbitrary $A,B\in\mathcal{L}_{r}\cup\mathcal{L}_{r'}$, $A\neq{B}$.\\

\noindent
\textbf{Case 1:} $\lvert{A}\rvert=\rvert{B}\rvert$.\\
Without loss of generality assume $A,B\in\mathcal{L}_{r}$, the case of $A,B\in\mathcal{L}_{r'}$ follows by a similar argument. From $\lvert{A}\rvert=\rvert{B}\rvert$ we can deduce that $\lvert{A}\backslash{B}\rvert=\lvert{B}\backslash{A}\rvert$. Let $A\backslash{B}=\{a_{1},\dots,a_{k}\}$ and $B\backslash{A}=\{b_{1},\dots,b_{k}\}$ for some $k\geq1$. If $k=1$, then $\left(A,A\cup\{b_{1}\},B\right)$ is a desired shortest $A,B$-path, see Figure \ref{fig1}.
\begin{figure}[H]
\centering
\begin{tikzpicture}[scale=7/10]
\draw[thick,blue] (-6,-1.5)--(0,-0.5)--(6,-1.5);

\draw[ultra thick,gray] (-7,-1.5)--(10,-1.5);
\node at (-7.8,-1.5){\small$\mathcal{L}_{r}$};
\draw[dashed,gray] (-7,-0.5)--(10,-0.5);
\node at (-7.8,-0.5){\small$\mathcal{L}_{r+1}$};
\draw[dashed,gray] (-7,0.5)--(10,0.5);
\node at (-7.8,0.5){\small$\mathcal{L}_{r+2}$};
\node at (-7.8,1.36){\small$\vdots$};
\draw[ultra thick,gray] (-7,2)--(10,2);
\node at (-7.8,2){\small$\mathcal{L}_{r'}$};

\fill (-6,-1.5) circle (3pt);
\node at (-6,-1.8){\small$A$};
\fill (6,-1.5) circle (3pt);
\node at (6,-1.8){\small$B$};
\fill[gray] (0,-0.5) circle (2pt);
\node at (-1,-0.17){\small$A\cup\{b_{1}\}$};
\end{tikzpicture}
\caption{A shortest $A,B$-path when $|A|=|B|$ and $k=1$}
\label{fig1}
\end{figure}

\noindent
If $k\geq2$, we first take $(A,A\cup\{b_{1}\})$. Then, we iteratively add $b_{i+1}$ and delete $a_{i}$ for $i\in[k-1]$ till we reach the vertex $A\cup\{b_{1},\dots,b_{k}\}\backslash\{a_{1},\dots,a_{k-1}\}$. Finally, we delete $a_{k}$ and reach $B$, completing the construction of a desired shortest $A,B$-path. See Figure \ref{fig2} for an illustration.

\begin{figure}[H]
\centering
\begin{tikzpicture}[scale=7/10]
\draw[thick,blue] (-6,-1.5)--(-5,-0.5)--(-4,0.5)--(-3,-0.5)--(-2,0.5)--(-1,-0.5)--(0,0.5)--(1,-0.5)--(2,0.5)--(3,-0.5)--(6,-1.5);

\draw[ultra thick,gray] (-7,-1.5)--(10,-1.5);
\node at (-7.8,-1.5){\small$\mathcal{L}_{r}$};
\draw[dashed,gray] (-7,-0.5)--(10,-0.5);
\node at (-7.8,-0.5){\small$\mathcal{L}_{r+1}$};
\draw[dashed,gray] (-7,0.5)--(10,0.5);
\node at (-7.8,0.5){\small$\mathcal{L}_{r+2}$};
\node at (-7.8,1.36){\small$\vdots$};
\draw[ultra thick,gray] (-7,2)--(10,2);
\node at (-7.8,2){\small$\mathcal{L}_{r'}$};

\fill (-6,-1.5) circle (3pt);
\node at (-6,-1.8){\small$A$};
\fill (6,-1.5) circle (3pt);
\node at (6,-1.8){\small$B$};
\fill[gray] (-5,-0.5) circle (2pt);
\node at (-6,-0.17){\small$A\cup\{b_{1}\}$};
\fill[gray] (-4,0.5) circle (2pt);
\node at (-5.23,0.83){\small$A\cup\{b_{1},b_{2}\}$};
\fill[gray] (-3,-0.5) circle (2pt);
\node at (-3,-0.87){\small$A\cup\{b_{1},b_{2}\}\backslash\{a_{1}\}$};
\fill[gray] (-2,0.5) circle (2pt);
\fill[gray] (-1,-0.5) circle (2pt);
\fill[gray] (0,0.5) circle (2pt);
\fill[gray] (1,-0.5) circle (2pt);
\fill[gray] (2,0.5) circle (2pt);
\fill[gray] (3,-0.5) circle (2pt);
\node at (6.3,-0.17){\small$A\cup\{b_{1},\dots,b_{k}\}\backslash\{a_{1},\dots,a_{k-1}\}$};
\end{tikzpicture}
\caption{A shortest $A,B$-path when $|A|=|B|$ and $k\geq2$}
\label{fig2}
\end{figure}

\noindent
\textbf{Case 2:} $\lvert{A}\rvert\neq\lvert{B}\rvert$.\\
Assume that $A\in\mathcal{L}_{r}$ and $B\in\mathcal{L}_{r'}$. It follows that $\lvert{B}\backslash{A}\rvert-\lvert{A}\backslash{B}\rvert=r'-r$. Let $B\backslash{A}=\{b_{1},\dots,b_{k}\}$ for some $k\geq{r'-r}$. If $A\backslash{B}=\emptyset$, then we obtain the desired $A$-$B$ path by simply adding elements of $B\backslash{A}$ to $A$ one at a time, see Figure \ref{fig3}.

\begin{figure}[H]
\centering
\begin{tikzpicture}[scale=7/10]
\draw[thick,blue] (-6,-1.5)--(-3,-0.5)--(0,0.5)--(6,2);

\draw[ultra thick,gray] (-7,-1.5)--(10,-1.5);
\node at (-7.8,-1.5){\small$\mathcal{L}_{r}$};
\draw[dashed,gray] (-7,-0.5)--(10,-0.5);
\node at (-7.8,-0.5){\small$\mathcal{L}_{r+1}$};
\draw[dashed,gray] (-7,0.5)--(10,0.5);
\node at (-7.8,0.5){\small$\mathcal{L}_{r+2}$};
\node at (-7.8,1.36){\small$\vdots$};
\draw[ultra thick,gray] (-7,2)--(10,2);
\node at (-7.8,2){\small$\mathcal{L}_{r'}$};

\fill (-6,-1.5) circle (3pt);
\node at (-6,-1.8){\small$A$};
\fill (6,2) circle (3pt);
\node at (6,2.3){\small$B$};
\fill[gray] (-3,-0.5) circle (2pt);
\node at (-4,-0.17){\small$A\cup\{b_{1}\}$};
\fill[gray] (0,0.5) circle (2pt);
\node at (-1.23,0.83){\small$A\cup\{b_{1},b_{2}\}$};
\end{tikzpicture}
\caption{A shortest $A,B$-path when  $|A|\neq |B|$ and $A\backslash{B}=\emptyset$}
\label{fig3}
\end{figure}

\noindent
If $A\backslash{B}\neq\emptyset$, let $A\backslash{B}=\{a_{1},\dots,a_{m}\}$ with $m=k-(r'-r)$. We first take $(A,A\cup\{b_{1}\})$, then we iteratively add $b_{i+1}$ and delete $a_{i}$ for $i\in[m]$ till we reach the vertex $A\cup\{b_{1},\dots,b_{m+1}\}\backslash\{a_{1},\dots,a_{m}\}$, afterwards we add $b_{m+2},\dots,b_{k}$ one at a time and eventually reach $B$. This constructs the shortest $A,B$-path, see Figure \ref{fig4} for an illustration.
\end{proof}

\begin{figure}[H]
\centering
\begin{tikzpicture}[scale=7/10]
\draw[thick,blue] (-6,-1.5)--(-5,-0.5)--(-4,0.5)--(-3,-0.5)--(-2,0.5)--(-1,-0.5)--(0,0.5)--(1,-0.5)--(2,0.5)--(3,-0.5)--(6,2);

\draw[ultra thick,gray] (-7,-1.5)--(10,-1.5);
\node at (-7.8,-1.5){\small$\mathcal{L}_{r}$};
\draw[dashed,gray] (-7,-0.5)--(10,-0.5);
\node at (-7.8,-0.5){\small$\mathcal{L}_{r+1}$};
\draw[dashed,gray] (-7,0.5)--(10,0.5);
\node at (-7.8,0.5){\small$\mathcal{L}_{r+2}$};
\node at (-7.8,1.36){\small$\vdots$};
\draw[ultra thick,gray] (-7,2)--(10,2);
\node at (-7.8,2){\small$\mathcal{L}_{r'}$};

\fill (-6,-1.5) circle (3pt);
\node at (-6,-1.8){\small$A$};
\fill (6,2) circle (3pt);
\node at (6,2.3){\small$B$};
\fill[gray] (-5,-0.5) circle (2pt);
\node at (-6,-0.17){\small$A\cup\{b_{1}\}$};
\fill[gray] (-4,0.5) circle (2pt);
\node at (-5.23,0.83){\small$A\cup\{b_{1},b_{2}\}$};
\fill[gray] (-3,-0.5) circle (2pt);
\node at (-3,-0.87){\small$A\cup\{b_{1},b_{2}\}\backslash\{a_{1}\}$};
\fill[gray] (-2,0.5) circle (2pt);
\fill[gray] (-1,-0.5) circle (2pt);
\fill[gray] (0,0.5) circle (2pt);
\fill[gray] (1,-0.5) circle (2pt);
\fill[gray] (2,0.5) circle (2pt);
\fill[gray] (3,-0.5) circle (2pt);
\node at (5,-0.87){\small$A\cup\{b_{1},\dots,b_{m+1}\}\backslash\{a_{1},\dots,a_{m}\}$};
\end{tikzpicture}
\caption{A shortest $A,B$-path when $|A|\neq |B|$ and  $A\backslash{B}\neq\emptyset$}
\label{fig4}
\end{figure}

\begin{proof}[Proof of Lemma \ref{lem3}]
Let $\mathcal{F}\subseteq\binom{[n+1]}{r}$ be a largest $\mathcal{D}_{r}(s,t)$-free family, namely, $\lvert\mathcal{F}\rvert=\mathrm{ex}\left(n+1,\mathcal{D}_{r}(s,t)\right)$. For every $A\in\binom{[n+1]}{n}$, we define $\mathcal{F}_{A}:=\mathcal{F}\cap\binom{A}{r}$. Since $\mathcal{F}_{A}$ is a $\mathcal{D}_{r}(s,t)$-free family with ground set of size $n$, we have $\lvert\mathcal{F}_{A}\rvert\leq\mathrm{ex}\left(n,\mathcal{D}_{r}(s,t)\right)$. On the other hand, since every member of $\mathcal{F}$ is contained in $\binom{n+1-r}{n-r}=n+1-r$ different $\mathcal{F}_{A}$'s,  we can bound $\mathrm{ex}\left(n+1,\mathcal{D}_{r}(s,t)\right)$ as follows
\begin{equation*}
\mathrm{ex}\left(n+1,\mathcal{D}_{r}(s,t)\right)=\lvert\mathcal{F}\rvert=\frac{\sum_{A\in\binom{[n+1]}{n}}\lvert\mathcal{F}_{A}\rvert}{n+1-r}\leq\frac{(n+1)\mathrm{ex}\left(n,\mathcal{D}_{r}(s,t)\right)}{n+1-r}.
\end{equation*}
Consequently, we have
\begin{equation*}
\frac{\mathrm{ex}\left(n,\mathcal{D}_{r}(s,t)\right)}{\binom{n}{r}}\geq\frac{(n+1-r)\mathrm{ex}\left(n+1,\mathcal{D}_{r}(s,t)\right)}{(n+1)\binom{n}{r}}=\frac{\mathrm{ex}\left(n+1,\mathcal{D}_{r}(s,t)\right)}{\binom{n+1}{r}}.
\end{equation*}

\vspace{1em}
To show the second monotonicity, let $\mathcal{F}\subseteq\binom{[n+1]}{r+1}$ be a largest $\mathcal{D}_{r+1}(s,t)$-free family. For every $x\in[n+1]$, we define $\mathcal{F}_{x}:=\left\{A\backslash\{x\}:\,A\in\mathcal{F}\text{ with }x\in{A}\right\}$. It is a simple observation that $\mathcal{F}_{x}$ is a $\mathcal{D}_{r}(s,t)$-free family with ground set of size $n$, so we have $\lvert\mathcal{F}_{x}\rvert\leq\mathrm{ex}\left(n,\mathcal{D}_{r}(s,t)\right)$. Moreover, since every member of $\mathcal{F}$ contributes one member to $r+1$ different $\mathcal{F}_{x}$'s, we obtain the following bound
\begin{equation*}
\mathrm{ex}\left(n+1,\mathcal{D}_{r+1}(s,t)\right)=\lvert\mathcal{F}\rvert=\frac{\sum_{x\in[n+1]}\lvert\mathcal{F}_{x}\rvert}{r+1}\leq\frac{(n+1)\mathrm{ex}\left(n,\mathcal{D}_{r}(s,t)\right)}{r+1}.
\end{equation*}
Accordingly,
\begin{equation*}
\frac{\mathrm{ex}\left(n,\mathcal{D}_{r}(s,t)\right)}{\binom{n}{r}}\geq\frac{(r+1)\mathrm{ex}\left(n+1,\mathcal{D}_{r+1}(s,t)\right)}{(n+1)\binom{n}{r}}=\frac{\mathrm{ex}\left(n+1,\mathcal{D}_{r+1}(s,t)\right)}{\binom{n+1}{r+1}}.
\end{equation*}
Taking the limit for $n\to\infty$, it holds that $\pi\left(D_{r}(s,t)\right)\geq\pi\left(\mathcal{D}_{r+1}(s,t)\right)$.
\end{proof}
\end{document}